\let\footnote=\endnote
\newcommand{\secref}[1]{\S\ref{#1}}
\newcommand{\mytag}[2]{%
  \text{#1}%
  \@bsphack
  \protected@write\@auxout{}%
         {\string\newlabel{#2}{{#1}{\thepage}}}%
  \@esphack
}
\begin{document}

\RUNTITLE{The Continuous Joint Replenishment Problem is Strongly $\mathcal{NP}$-Hard}
\TITLE{The Continuous Joint Replenishment Problem is Strongly $\mathcal{NP}$-Hard}
\ARTICLEAUTHORS{%
\AUTHOR{ Alexander Tuisov, Liron Yedidsion  \thanks{Corresponding author. Email -
lirony@ie.technion.ac.il}\\}}

\ABSTRACT{
The Continuous Periodic Joint Replenishment Problem (CPJRP) has been one of the core and most studied problems in supply chain management for the last half a century. Nonetheless, despite the vast effort put into studying the problem, its complexity has eluded researchers for years. Although the CPJRP has one of the tighter constant approximation ratio of 1.02, a polynomial optimal solution to it was never found. Recently, the discrete version of this problem was finally proved to be $\mathcal{NP}$-hard. In this paper, we extend this result and finaly prove that the CPJRP problem is also strongly $\mathcal{NP}$-hard. 
Key words: Computational Complexity, Joint Replenishment Problem, Supply Chain
Management.}
\maketitle


\section{Introduction}

The joint replenishment problem (JRP) is a basic problem in the field of inventory management. The JRP aims to synchronize orders of different commodities so as to order them together and save costs. In this research, we refer to the schedule of the replenishment times for each commodity as the ordering policy. Whenever a commodity is ordered, it incurs a fixed
ordering cost as well as linear holding costs
that are proportional to the amount of the commodity held in storage. Linking
all commodities, a joint ordering cost is incurred whenever one or more
commodities are ordered. The objective of JRP is to minimize the sum of
ordering and holding costs. \\There are many
distinctions between the different JRP models studied in the literature, which
we elaborate upon in Section \ref{LR}. In this research, we study the continuous periodic
review JRP (CPJRP) model with continuous infinite horizon, and steady (stationary) demand. That is, facing a constant demand, we need to
minimize the average periodic holding and setup costs of all commodities as
well as the joint ordering costs. However, in the CPJRP the orders of each
commodity are placed periodically. The cycle times for each
commodity's orders are pre-determined and inflexible. The joint replenishment
is continuously reviewed and is paid only at times where at least one
commodity is being ordered.
Of all the different variations of the JRP model, the CPJRP is by far the most studied one.\\
Except for the non-stationary demand JRP model, the complexity of all JRP variations has been an open question for more than five decades, until recently when \citet{CY2018} finally resolved the complexity of the discrete version of our problem, namely the discrete periodic JRP (DPJRP).
In this research we extend the result of \citet{CY2018}, and prove that the CPJRP is strongly $\mathcal{NP}$-hard.

\subsection{\label{LR}Literature review}

Many variations of the JRP problem have been studied in the literature. \citet{ST2011} distinguished between some characteristics of this
problem.
\begin{itemize}
\item Commodity order policy constraints: There are three types of order
policy constraints for the JRP. The first model requires a periodic ordering
policy, also called a periodic review. A periodic review is the policy where for
each commodity we must pre-determine a cycle time. An order will occur at each
multiple of that cycle time. We refer to this model as the PJRP. The
second model does not require a cycle time for each commodity. However, it
requires a cyclic ordering policy. We refer to this model as the cyclic JRP. This model has no limits on the ordering policy.

\item Joint order policy constraints: The joint ordering cost in the JRP
model is a complicated function of the inter-replenishment times, so it is
often assumed that joint orders are placed periodically, even if some joint
orders are empty and the cycle times of the commodities are always a
multiple of the joint order cycle time. This model is often referred to
as Strict JRP. We refer to the model with continuous joint replenishment
review, where a joint order is placed only at the periods where at least one
commodity is ordered, as General JRP, or simply as JRP.

\item Demand type: We make a distinction between problems with
stationary demand for each commodity and problems with fluctuating demand.

\item Time horizon: The time horizon defines the horizon for which one must
plan an order policy. We distinguish between the problem with infinite horizon
and the problem with finite horizon.

\item Solution integrality: The integrality of the solution determines whether
the ordering policy will be integral or not.
\end{itemize}
Our focus in this research is the periodic, general, continuous time JRP with stationary demand and infinite horizon, referred to as the CPJRP.
The JRP is a special case of the One-Warehouse-N-Retailers problem (OWNR), which deals
with a single warehouse receiving goods from an external supplier and
distributing to multiple retailers. The warehouse could also serve as a
storage point. JRP in particular is a special case of the OWNR with a very
high warehouse holding cost. \cite{ARJ1989} stated that since JRP
is a special case of OWNR, proving JRP hardness also proves the hardness of OWNR.

\textbf{Strict PJRP. }The problem of Strict PJRP was well covered in the
reviews by \cite{GS1989} and \cite{KG2008}.
Many research attempts have been made in order to find efficient solutions to
the Strict PJRP since the early 1970s. Heuristic approaches were suggested in \cite{S1971}, \cite{N1973}, \cite{S1976}, \cite{V1993} \cite{KR1983, GB1979, KR1991, GD1993, V1996, FM2001, v2002, PD2004, WFD1997}, and \cite{O2005}.

Since JRP is a special case of OWNR, results regarding the OWNR hold for JRP
as well. Hence, the following results are applicable for JRP. A prominent
advancement in the study of OWNR, the optimal Power-of-Two policy, was
achieved by \cite{R1985}. This policy could be computed in $O(n\log n)$
time. \cite{R1985} proved that the cost of the best power-of-two policy
achieves $98\%$ of an optimal policy ($94\%$ if the base planning period is
fixed). In other words, he suggested a 1.02-approximation (1.064 for the
fixed-based planning period) for JRP, where a $\rho$-approximation algorithm
is an algorithm that is polynomial with respect to the number of elements, and
the ratio between the worst case scenario solution and the optimal solution is
bounded by a constant, $\rho$. Based on \cite{R1985}, \cite{J1985} proposed
an efficient algorithm that offers a replenishment policy in which the cost is
within a factor of $\sqrt{\frac{9}{8}}\approx1.06$ of the optimal solution.
This approximation was later improved to $\frac{1}{\sqrt{2}\ln2}$ for a
non-fixed-based planning period \citep{mr1993}.

Research has been done based on the Power-of-Two policy, including
 \cite{LY2003, MR1987, TB2001}. \cite{TB2001} have also noted that
finding the optimal lot sizing policies for stationary demand lot sizing
problems is still an open issue.

\cite{LM1994} presented a fully polynomial time approximation
scheme (FPTAS) for the Strict PJRP model with fixed base. Later, \cite{S20012} presented a quasi-polynomial-time approximation scheme (QPTAS),
which shows that the problem is most likely not $\mathcal{APX}$-hard. In
addition, an efficient polynomial time approximation scheme (EPTAS) for JRP with
finite time horizon and stationary demand was presented by \cite{NS2013}.

This problem was researched in many other different setups, such as JRP under
resource constraints \citep{G1975,K2000,MC2006}, minimum order quantities \citep{PD2006a}, and non-stationary holding cost \citep{LRS2006,NS2009,L2008}.

\textbf{General PJRP.} \cite{PD2005} pointed out that adding
the correction factor leads to a completely different problem, at least in
terms of exact solvability. \cite{PD2004} show that changing
the model from Strict PJRP to PJRP significantly changes the joint
replenishment cycles and the commodities replenishment cycles. The difference
in solvability is evidenced by the sheer number of decision variables. In the
Strict PJRP, all commodities' cycle times are simple functions of the joint
replenishment cycle time. Thus, there is actually only a single decision
variable. However, this is not the case with the PJRP where we have $n$
decision variables, one for each commodity. In practice,
Strict PJRP is much less common than PJRP as it involves paying for empty
deliveries. Strict PJRP may occur only if there is a binding contract with a
delivery company. Although such a binding contract may decrease the cost of
the joint replenishment significantly, it usually limits the flexibility of
choosing the joint replenishment cycles. \cite{ST2011} presented a polynomial time approximation scheme (PTAS) for the
PJRP case.

\textbf{Finite horizon.} Several heuristics were designed to deal with the
finite horizon model. Most of the finite time heuristics assume variable
demands and run-in time $\Omega\left(T\right) $\textbf{\ }\citep{LRS2006,J90}. \cite{ST2011} presented a polynomial-time
$\sqrt{9/8}$-approximation algorithm for the JRP with dynamic policies and
finite horizon. As the time horizon $T$ increases, the ratio converges to
$\sqrt{9/8}$. \cite{ST2011} also presented an FPTAS for the
Strict PJRP case with no fixed base and a finite time horizon.

\section{\label{Model Formulation}Model Formulation}

We consider the case of an infinite time horizon, and a system composed of
several commodities, for each of which there is an external stationary demand.
The demand has to be satisfied in a timely fashion so as to prevent delays. Backlogging and lost sales are
not allowed. Each commodity incurs a fixed ordering cost for each time at
which an order of the commodity is placed, as well as a linear inventory
holding cost for each time unit (referred to as a period) a unit of commodity remains in storage. In
addition, a joint ordering cost is incurred for each time where one or
more orders are placed. We use the following notations, were the units are
given in square brackets:%
\begin{align*}
&  N-\text{ Number of commodities in the system,}\left[units\right]
\text{.}\\
&  \lambda_{c}-\text{Demand rate for commodity}c\text{per period,}\left[
\frac{units}{period}\right]\text{.}\\
&  h_{c}-\text{Holding cost for commodity }c\text{ per period,}\left[
\frac{\$}{units\cdot period}\right]\text{.}\\
&  K_{c}-\text{Fixed ordering cost for commodity }c,\text{}\left[\$\right]
\text{.}\\
&  K_{0}-\text{Fixed joint ordering cost,}\left[\$\right]\text{.}%
\end{align*}
The objective is to find an ordering cycle time, $t_{c} \in \mathbb{R}$, for each
commodity, $c$, so as to minimize the periodic sum of ordering and holding
costs of all commodities.

The simple model, in which there is only a single commodity, is known as the
Economic Order Quantity (EOQ). While examining commodity $c,$ we define its
\emph{standalone problem} as the optimal ordering quantity problem for a
single commodity, $c$, with no joint setup cost and an infinite horizon. The
\emph{standalone problem }is a simple EOQ problem.

The EOQ model assumes without loss of generality that there is no on-hand
inventory at time $0$. Shortage is not allowed, so we must place an order at
time $0$. The average periodic cost, as a function of the cycle time $t_{c},$
denoted by $g\left(t_{c}\right),$ is given by
\begin{equation}
g\left(t_{c}\right) =\frac{K_{c}}{t_{c}}+\lambda_{c}h_{c}\frac{t_{c}}{2},
\label{EOQ formula}%
\end{equation}
and the optimal cycle time for $g\left(t_{c}\right)$, denoted by
$t_{c}^{\ast},$ is%
\begin{equation}
t_{c}^{\ast}=\sqrt{\frac{2K_{c}}{h_{c}\lambda_{c}}}.
\label{EOQ optima solution}%
\end{equation}


See full elaboration and additional analysis in \cite{N2001} and \cite{Z2000}.

\section{$\mathcal{NP}$-Hardness proof}
The $\mathcal{NP}$-Hardness proof of the CPJRP is based on the $\mathcal{NP}$-Hardness proof of the DPJRP in \cite{CY2018}. In this proof, we take advantage of the instance of the DPJRP used in \cite{CY2018} with slight changes in the instance parameters and without the integrality constraint that defines the DPJRP. We show that the instance we construct for the CPJRP is as hard as the \emph{3SAT} instance from which \cite{CY2018} constructed their 
instance, and thus, Strongly $\mathcal{NP}$-hard. 

The \emph{$3SAT$} is defined as follows:
\begin{definition}
Given a logical expression, $\varphi,$ in a Conjunctive Normal Form
(CNF)\footnote{An expression that is a conjunction of clauses, where each
clause is a disjunction of literals.} with $m$ clauses and $n$ variables,
$x_{1},...,x_{n},$ where each clause contains exactly 3 literals, is there a
feasible assignment to the variables such that each clause contains at least
one true literal?
\end{definition}
Before we continue with the $\mathcal{NP}$-hardness proof, we would like to show a schematic sketch of the proof in \citet{CY2018}, as in this paper we meticulously show that each step in this proof holds in the continuous environment for the instance we built.

\subsection{Proof sketch} \label{Sec: Proof sketch}
In this section, we explain the steps taken in \cite{CY2018}. For each step we explain the adjustments required for a continuous environment.
\begin{enumerate}
\item \underline{Polynomial time reduction} -- Given the instance of \emph{3SAT}, three sets of commodities were constructed,
\begin{itemize}
\item \emph{Constant} commodities - commodities, denoted $c^y_i$, that in a discrete environment would be ordered at $t^{*}_{c^y_i}$ regardless of the other commodities.
\item  \emph{Variable} commodities - commodities, denoted $c^x_i$, whose $t_{c^x_i}$ may change according to other commodities' order pattern. Each commodity in this set is associated with one variable in the original \emph{3SAT} problem. The optimal cycle time of $c_i^x \in \emph{Variables}$ is one of two unique prime numbers $\underline{p}_i$ and $\overline{p}_i=\underline{p}_i+b_i$. Each such option is associated with either the variable or its negation in the original $\emph{3SAT}$ problem: $\underline{p}_i$ is associated with $x_i = False$ and $\overline{p}_i$ with $x_i = True$.
\item \emph{Clause} commodities - commodities, denoted $c^z_i$, which, just like the \emph{Constants}, in a discrete environment would be ordered at $t^{*}_{c^z_i}$ regardless of the other commodities. However, $t^{*}_{c^z_i}$ is associated with a clause in the original \emph{3SAT}. $t^{*}_{c^z_i}$
is the product of the primes associated with the clause's three literals.
Accordingly, if the cycle time of at least one of the relevant variable
commodities were set to the cycle time associated with the right literal, the
cycle time of the \emph{Clauses} commodity would be synchronized with it. For example, given a clause $\left (x_3 \cup \overline{x}_6 \cup x_9\right )$ we create a commodity $c^z_i$ with $t^{*}_{c^z_i}=\underline{p}_3 \cdot \overline{p}_6 \cdot \underline{p}_9$.

\end{itemize}
We denote the instance created for the JRP problem by $\Gamma$ (both for the DPJRP and the CPJRP).
In our reduction, we use the same three sets, but adjust the holding and setup costs of commodities \emph{Constants} and \emph{Clauses}.
\item \underline{\emph{Constants} and \emph{Clauses} cycle times} -- \cite{CY2018} showed that the optimal cycle time of \emph{Constants} and \emph{Clauses} are set to $t^{*}_{c^y_i}$ and $t^{*}_{c^z_i}$, respectively, regardless of other commodities.\\
In a continuous environment, the optimal cycle time would always be influenced by other commodities. However, we show that it is restricted to a very narrow time interval. Moreover, we show that in an optimal solution all \emph{Constants} and \emph{Clauses} are synchronized in a way that resembles a discrete environment. That is, for any optimal solution there exists a seed $\beta$, for which in that optimal solution $\forall c^y_i \in \emph{Constants}: t^{*}_{c^y_i}=\beta t^{*}_{c^y_i}$ and $\forall c^z_i \in \emph{Clauses}:t^{*}_{c^z_i}=\beta t^{*}_{c^z_i}$.
\item \underline{\emph{Variables} cycle time} -- \cite{CY2018} showed that the optimal cycle time of variable $t_{c^x_i}$ is one of two unique prime numbers $\underline{p}_i$ and $\overline{p}_i=\underline{p}_i+b_i$.
We show that in a continuous environment, the same applies with a small change. The optimal cycle time for a \emph{Variables} commodity applies $c_i^x \in \left \{\beta \underline{p}_i,\beta \overline{p}_i \right\}$.

\item \underline{Optimal solution} -- \cite{CY2018} show that if there is a solution to the original \emph{3SAT} problem, in an optimal solution of $\Gamma$, the cycle times of \emph{Variables} are set to synchronize with all of the commodities of type \emph{Clauses}.\\
To do so, they formulated the total cost of solution $S$, denoted by
$TC\left( S\right)$, as a sum of three cost functions: The first cost
function, $TC_{\emph{Constants}}\left( S\right) $, sums all the costs
that are associated with the commodities $c^y_i\in\emph{Constants}$,
including all the joint replenishment costs induced by $c^y_i\in\emph{Constants}$. The second cost function, $TC_{\emph{Variables}}\left(
S\right)$, sums all the costs that are associated with the commodities
$c_{i}^{x}\in\emph{Variables}$, including only the marginal addition to the joint replenishment costs induced by $c^x_i\in\emph{Variables}$, assuming all the joint replenishment costs induced by $c^y_i\in\emph{Constants}$ are already paid for. The third
ךcost function, $TC_{\emph{Clauses}}\left( S\right)$, sums all the costs
that are associated with the commodities $c_i^z\in\emph{Clauses}$, including only the marginal addition to the joint replenishment costs induced by $c^z_i\in\emph{Clauses}$ assuming all the joint replenishment costs induced by $c^y_i\in\emph{Constants}$ and by $c_{i}^{x}\in\emph{Variables}$ are already paid for.\\
To show the equivalence to the \emph{3SAT}, they proved the following two steps:
\begin{enumerate}
\item Ignoring commodities of type \emph{Clauses}, for each $c_i^x \in Variables$ the marginal cost of setting $t_{c_i^x}=\underline{p}_i$ is lower than setting it to $t_{c_i^x}=\overline{p}_i$; thus showing that setting $t_{c_i^x}=\underline{p}_i$ for all $c_i^x \in \emph{Variables}$ gives a lower bound on the marginal cost of \emph{Variables}, denoted $LB(TC_{\emph{Variables}})$ and setting $t_{c_i^x}=\overline{p}_i$ for all $c_i^x \in \emph{Variables}$ gives an upper bound on the marginal cost of \emph{Variables}, denoted $UB(TC_{\emph{Variables}})$.
\item Not synchronizing even one commodity of type \emph{Clauses} with at least one of the commodities of type \emph{Variables} costs more than $UB(TC_{\emph{Variables}})-LB(TC_{\emph{Variables}})$
\end{enumerate}
by that proving that an optimal solution to $\Gamma$ is equivalent to at least one true assignment to the original \emph{3SAT} instance, if such exists.\\
We show that both proofs still hold even when we set $t_{c^y_i} \in \left \{\beta t^{*}_{c^y_i},\beta t^{*}_{c^z_i}\right\}$ and $t^*_{c^y_i}=\beta t_{c^y_i}$ instead of $t_{c^y_i} \in \left \{t^{*}_{c^y_i},t^{*}_{c^z_i}\right\}$ and $t^*_{c^y_i}= t_{c^y_i}$, respectively.
\end{enumerate}

In the following we use the titles of the steps in \secref{Sec: Proof sketch} as subsections associated with each step.

\subsection{Polynomial time reduction}

The total cost of types $c_i^y \in \emph{Constants}$ and $c_i^z \in \emph{Clauses}$ commodities in \cite{CY2018} is a constant. We change the cost parameters of these commodities to adjust them to a continuous environment with as little change to their respective optimal cycle times as possible. Accordingly, we create a commodity for which the optimal solution is $\beta t^{*}_{c^y_i} \left(\beta t^*_{c^z_i}\right)$, where the seed $\beta$ is close to 1, regardless of the cycle times of the other commodities in the problem.

In \cite{CY2018}, each commodity $c^y_i\in\emph{Constants} \ \left (c^z_i\in\emph{Clauses}\right )$ is associated with a time $t^*_{c^y_i} \ \left(t^*_{c^z_i}\right)$. This cycle time is the optimal cycle time for that commodity regardless of the solution to other commodities. For our reduction, we denote an auxiliary parameter
\begin{equation}
    \label{eq: delta} \delta=\frac{1}{6n \left ( \overline{p}_n \right)^{6} }.
\end{equation}

We set the holding cost ($h_c$) and ordering cost ($K_c$) for each commodity, $c^y_i\in\emph{Constants}$ as follows:%
\begin{align}
h_{c^y_i}  &  =\frac{1}{(\delta^2+2\delta)(t^*_{c^y_i})^2},\label{DEF const h}\\
K_{c^y_i}  &  =\frac{1}{(\delta^2+2\delta)}.\label{DEF const K}%
\end{align}
Similarly, for each commodity, $c^z_i\in\emph{Clauses}$:
\begin{align*}
h_{c^z_i}  &  =\frac{1}{(\delta^2+2\delta)(t^*_{c^z_i})^2},
K_{c^z_i}  &  =\frac{1}{(\delta^2+2\delta)}.
\end{align*}
We make no changes in the holding and setup costs of $c_i^x\in\emph{Variables}$:
\begin{align}
h_{c_{i}^{x}}  &  =\alpha_{c}\frac{\underline{p}_{i}^{2}-b_{i}^{2}}%
{\underline{p}_{i}\left( \underline{p}_{i}+\frac{b_{i}}{2}\right)
\frac{b_{i}}{2}},\label{DEF vars h}\\
K_{c_{i}^{x}}  &  =h_{c_{i}^{x}}\cdot\underline{p}_{i}\left(  \underline
{p}_{i}+b_{i}\right) -\frac{\underline{p}_{i}+b_{i}}{\underline{p}_{i}%
+b_{i}-1}\alpha_{c}\overline{\alpha}_{v}, \label{DEF vars K}%
\end{align}
where $\alpha_{c},\ \overline{\alpha}_{v},\ \underline{p}_{i}$, and $b_i$ are constants taken from \cite{CY2018}, such that $\underline{p}_{i}$ and $\overline{p}_{i}$ are prime numbers that are unique to commodity $c_i^x$. Without loss of generality, we assume that the primes are sorted in ascending order, which makes $\underline{p}_n=\max \left\{\underline{p}_i \right \}$. Moreover, according to \cite{CY2018}, $b_i$ is small enough so that $\forall{i}:\overline{p}_i=\underline{p}_i+b_i<\underline{p}_{i+1}.$  \\
Just like \cite{CY2018}, We set the joint ordering cost to be:
\begin{equation}
K_{0}=1, \label{DEF K_0}%
\end{equation}
and the demand for each commodity per time unit to be:
\begin{equation}
\forall c:\lambda_c=2. \label{DEF lambda}
\end{equation}

\subsection{\emph{Constants} and \emph{Clauses} cycle times} \label{Sec: Const and Claus}
In this section, we refer only to $c_i^y\in\emph{Constants}$ and note that everything applies to $c_i^z\in\emph{Clauses}$ as well due to their similar cost functions.\\
For each commodity $c_i^y\in\emph{Constants}$, we define two EOQ
problems. In the first EOQ problem, denoted $\theta_{1},$ we define:
$h_{1}=h_{c_i^y}$ and $K_{1}=K_{c_i^y}$. The solution for this
problem defines a lower bound on the marginal average periodic cost of
commodity ${c_i^y}$. This problem is in fact the standalone cost of commodity ${c_i^y}$. In the second EOQ problem, denoted $\theta_{2},$ we define: $h_{2}=h_{c_i^y}$ and $K_{2}=K_{c_i^y}+K_{0}$. That is, we pay $K_{0}$ for
each order of commodity ${c_i^y}$. The solution for this problem defines an
upper bound on the marginal average periodic cost of commodity ${c_i^y}$.
Let us define the optimal solution for $\theta_i$ by $t_i$ for $i=1,2$
Substituting for $h_{c_i^y}$, $K_{c_i^y}$, and $\lambda_{c_i^y}$ using Eqs. $\left(
\ref{DEF const h}\right)$, $\left( \ref{DEF const K}\right)$, and (\ref{DEF lambda}) into Eq. $\left( \ref{EOQ optima solution}\right),$ we get:%
\begin{equation}
\label{SAC} t_1=t^*_{c_i^y}.
\end{equation}
Similarly, substituting for $h_c$, $K_c$, $k_0$, and $\lambda_c$ using Eqs. 
\eqref{DEF const h}, \eqref{DEF const K}, \eqref {DEF K_0}, and \eqref{DEF lambda} into Eq. \eqref{EOQ optima solution}, we get:%
\[
t_2=(1+\delta) t^*_{c_i^y}.
\]
\begin{corollary}
\label{Corollary constants cycle time} In any optimal solution to $\Gamma,$ $t^*_{c_i^y}\leq t_{c_i^y}\leq (1+\delta) t^*_{c_i^y}$ for any $c\in\emph{Constants}$\emph{.}
\end{corollary}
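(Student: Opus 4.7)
The plan is to sandwich $t_{c_i^y}$ between the two EOQ optima $t_1=t^*_{c_i^y}$ and $t_2=(1+\delta)\,t^*_{c_i^y}$ via an exchange argument against the auxiliary problems $\theta_1$ and $\theta_2$ introduced above. The crucial observation is that, for any fixed ordering pattern of the other commodities, the marginal contribution of $c_i^y$ to the total cost at cycle time $t$ always lies in the interval $\left[g_1(t),\,g_2(t)\right]$, where $g_i(t)$ denotes the average periodic cost of the EOQ problem $\theta_i$. The lower endpoint $g_1(t)$ is attained when every order of $c_i^y$ coincides with an order of some other commodity (so $c_i^y$ adds nothing to the joint ordering bill), and the upper endpoint $g_2(t)$ is attained when no order of $c_i^y$ coincides with others (so $c_i^y$ is charged the full $K_0$ per each of its $1/t$ orders per unit time).

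To prove the lower bound $t_{c_i^y}\geq t_1$, I argue by contradiction. Suppose an optimal solution $S$ has $t_{c_i^y}^S<t_1$. Construct $S'$ by changing only $t_{c_i^y}$ to $t_1$ and re-applying the marginal-cost sandwich to both sides: this yields $TC(S)-TC(S')\geq g_1(t_{c_i^y}^S)-g_2(t_1)$. Substituting the explicit parameter values from \eqref{DEF const h}--\eqref{DEF const K}, together with $K_0=1$ and $\lambda_{c_i^y}=2$, gives the clean identity $g_2(t_1)=g_1(t_1)+K_0/t_1$; the convexity of $g_1$ together with its minimum at $t_1$ then forces $g_1(t_{c_i^y}^S)>g_2(t_1)$ once $t_{c_i^y}^S$ is below the appropriate threshold, contradicting optimality. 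The upper bound $t_{c_i^y}\leq t_2$ follows by a symmetric exchange in which $S'$ sets $t_{c_i^y}=t_2$, relying on $g_2(t)\geq g_2(t_2)$ and the convex minimum of $g_2$ at $t_2$.

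The main obstacle is that a one-shot exchange of the form above actually produces a slightly larger admissible interval around each endpoint than the claimed $[t_1,t_2]$, because the inequality $g_1(t)\leq g_2(t_1)$ is strictly weaker than $t\geq t_1$ when $t$ is close to $t_1$. To close this gap, I plan to exploit the structural identity $g_2(t)=g_1(t)+K_0/t$ and work with the synchronization fraction $\alpha\in[0,1]$ of $c_i^y$'s orders that coincide with other commodities' orders: the marginal cost of $c_i^y$ equals $g_1(t)+(1-\alpha)K_0/t$, which is convex in $t$ for every fixed $\alpha$, with unique minimizer $\sqrt{(K_{c_i^y}+(1-\alpha)K_0)/h_{c_i^y}}$ that is monotone in $1-\alpha$ and ranges over exactly $[t_1,t_2]$ as $\alpha$ traverses $[0,1]$. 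Since the optimum of $TC$ over both $t_{c_i^y}$ and the induced synchronization pattern must coincide with one of these minimizers for the realized $\alpha$, the claimed sandwich follows, with the smallness of $\delta=1/(6n(\overline{p}_n)^6)$ from \eqref{eq: delta} ensuring that any slack induced by the discontinuous dependence of $\alpha$ on $t_{c_i^y}$ is negligible.
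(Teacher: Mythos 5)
Your first two paragraphs reproduce the paper's (implicit) argument: the marginal cost of $c_i^y$ is sandwiched between $g_1(t)$ and $g_2(t)=g_1(t)+K_0/t$, whose minimizers are $t_1=t^*_{c_i^y}$ and $t_2=(1+\delta)t^*_{c_i^y}$. The obstacle you then flag is real, and it is sharper than anything the paper says: even the tightest version of the exchange only yields a constraint of the form $g_1(t)\le g_2(t_2)=(1+\delta)\,g_1(t_1)$, which confines $t$ to an interval of radius roughly $\sqrt{2\delta}\;t^*_{c_i^y}$ around $t^*_{c_i^y}$, not the claimed radius $\delta\, t^*_{c_i^y}$. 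The paper offers no argument beyond the two bounding EOQ problems, so you have correctly located a missing step rather than overlooked one.

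The repair you propose, however, does not close the gap. Writing the marginal cost as $g_1(t)+(1-\alpha)K_0/t$ and noting that the fixed-$\alpha$ minimizer $\sqrt{(K_{c_i^y}+(1-\alpha)K_0)/h_{c_i^y}}$ sweeps exactly $[t_1,t_2]$ is fine, but the assertion that the true optimum ``must coincide with one of these minimizers for the realized $\alpha$'' is precisely what needs proof, and it is false in general: $\alpha$ is a function of $t$ determined by coincidences between the order epochs of $c_i^y$ and those of the other commodities, it is discontinuous, and the minimizer of $t\mapsto g_1(t)+(1-\alpha(t))K_0/t$ typically sits at a point where $\alpha$ jumps upward (a highly synchronized cycle time), at which the fixed-$\alpha$ stationarity condition fails. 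Concretely, if the remaining commodities were to order at all multiples of some $\tau<t_1$ with $g_1(\tau)<g_2(t_2)$, then $t_{c_i^y}=\tau$ (with $\alpha=1$) strictly beats every poorly synchronized $t\in[t_1,t_2]$, while your criterion would certify $t_1$ as the optimum for $\alpha=1$. The closing appeal to the smallness of $\delta$ does not rescue this, since the discrepancy between the $\sqrt{\delta}$-wide interval that the sandwich actually delivers and the $\delta$-wide interval being claimed is exactly the quantity being waved away. Making the corollary rigorous would require using the structure of $\Gamma$ itself --- which cycle times can actually achieve high synchronization with the other \emph{Constants} and \emph{Clauses} --- and not merely the two bounding EOQ problems $\theta_1$ and $\theta_2$.
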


Let us consider a non-discrete cycle time $t_{c_i^y}$ to be the product of a discrete number $t^{*}_{c_i^y}$ and a seed $\beta_{c_i^y}$, i.e., $t_{c_i^y} = t^{*}_{c_i^y} \cdot \beta_{c_i^y}$, where $1 \leq \beta_{c_i^y} \leq 1+\delta$.

For the analysis that follows we make use of two functions that quantify the average number of joint replenishments per time unit. Let us assume that all the commodities are ordered at time $0$. If the ratios between all $\beta_{c_i^y}$ are rational, there is a time $T$, which is the least common multiplier of all cycle times, in which all commodities will be ordered together again. Hence, it is sufficient to find the average number of joint replenishments within $T$ only. If, on the other hand, the ratios between some $\beta_{c_i^y}$ are irrational, then they will never be ordered together again. In that case we can calculate the average number of joint replenishments per period for each set of commodities with a rational ratio between them separately and sum these averages.
Accordingly, we refer to a finite time horizon $T$.
Each cycle time $t_{c_i^y}$ represents a series of orders $t_{c_i^y}$ time apart. Let us denote such a series by $F_{t_{c_i^y}}$, where $F_{t_{c_i^y}}=\left \{ t_{c_i^y},2t_{c_i^y},...,T\right\}$. A union over all these series gives us the set of all joint orders in time horizon $T$. The average number of joint replenishments per time period is the cardinality of the set of order points within the time horizon divided by $T$. That is:
\begin{equation*}
\frac{\left | \bigcup\limits_{t_{c_i^y} \in S} F_{t_{c_i^y}} \right |}{T},
\end{equation*}
where the absolute value (within the vertical bars) defines the cardinality of a set.\\
Comparing the average number of joint replenishments, we define two functions over general sets of time points $F_1,F_2,..,F_m$ (all bounded by $T$).

\begin{definition}[Union of Joint Replenishment ($UJR$)]Function
$UJR:F^m \mapsto \mathbb{R^{+}}$, where $m\leq n$ is the number of time series unionized, represents the average number of joint replenishments per time unit \emph{where at least one commodity belonging to one of the series $F_i$ for $i=1,...,m$  is ordered}.
\begin{equation*}
UJR(F_1,...,F_m)=\frac{\left | \bigcup\limits_{i=1}^{m} F_i \right |}{T}.
\end{equation*}
\end{definition}
\begin{definition}[Intersection of Joint Replenishment ($IJR$)] Function $IJR:(F^m) \mapsto \mathbb{R^{+}}$ represents the average number of joint replenishments per time unit \emph{where at least one commodity from each series $F_i$ for $i=1,...,m$ is ordered}.
\begin{equation*}
IJR(F_1,...,F_m)=\frac{\left | \bigcap\limits_{i=1}^{m} F_i \right |}{T}.
\end{equation*}
\end{definition}

For only two cycle times, $IJR$ could be calculated explicitly by using the least common multipliers (LCM). LCM could be used to define a tight upper bound on the frequency of intersection between two (or more) arithmetic sequences of numbers. This is an upper bound because it reflects the frequency of the intersection between the sequences that are in-phase, and if they're out of phase, the number of intersections would be zero (consider for example a commodity ordered every even time period and a commodity ordered every odd time period). However, an optimal solution will always strive to synchronize commodities orders to minimize the number of joint replenishments. Hence,
\begin{equation}
\label{LCM2IJR}
IJR(F_{t_1},F_{t_2})=\frac{1}{LCM(t_1,t_2)}.
\end{equation}
Note that LCM satisfies:
\begin{equation*}
\label{LCM}
LCM(a\cdot b,a \cdot c)=a\cdot LCM(b,c).
\end{equation*}

$UJR$ and $IJR$ are in fact the cardinality of union and intersection of sets normalized by a constant $T$. Accordingly, $UJR$ and $IJR$ hold the cardinality characteristics of union and intersection, some of which we make use of in our proof.
\begin{enumerate}
\item $UJR(F_1,F_2)=UJR(F_1)+UJR(F_2)-IJR(F_1,F_2)$
\item $IJR(F_1,F_2,F_3)\le IJR(F_1,F_2)$
\item $IJR(F_1,(F_2\cup F_3))=IJR(F_1,F2)+IJR(F_1,F_3)-IJR(F_1,F_2,F_3)$
\item $IJR(F_1,F_2) \leq IJR(F_3,F_4)$ for $F_1 \subseteq F_3$ and $F_2 \subseteq F_4$.
\end{enumerate}

Note that we do not always calculate $UJR$ or $IJR$ explicitly, but rather make use of the bounds on those functions in the analysis that follows.

\begin{theorem}
\label{theorem:same seeds}
In any optimal solution all seeds to the set \emph{Constants} will be identical, i.e., $\exists \beta, \forall {c_i^y} \in \emph{Constants} : \beta_{c_i^y} = \beta$.
\end{theorem}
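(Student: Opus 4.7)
My plan is a local exchange argument by contradiction. Suppose an optimal solution $S^{*}$ contains two constants $c_a^y, c_b^y \in \emph{Constants}$ with $\beta_{c_a^y} \ne \beta_{c_b^y}$, and let $S'$ be identical to $S^{*}$ except that $\beta_{c_b^y}$ is reset to $\beta_{c_a^y}$. I will show $TC(S') < TC(S^{*})$, contradicting optimality; equality of all seeds then follows by transitivity on pairs. Decomposing $TC = \sum_c g(t_c) + K_0 \cdot UJR$, only the EOQ term of $c_b^y$ and the $UJR$ contributions involving the series $F_{t_{c_b^y}}$ differ between the two solutions.

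For the EOQ term, plugging \eqref{DEF const h}--\eqref{DEF const K} and \eqref{DEF lambda} into \eqref{EOQ formula} gives
\[
g(\beta\, t^{*}_{c_b^y}) \;=\; \frac{1}{(\delta^{2}+2\delta)\, t^{*}_{c_b^y}}\Bigl(\beta + \tfrac{1}{\beta}\Bigr),
\]
a strictly convex function minimized at $\beta = 1$ with Taylor expansion $\beta + 1/\beta = 2 + (\beta - 1)^{2} + O((\beta - 1)^{3})$. Since $\beta_{c_a^y}, \beta_{c_b^y} \in [1, 1+\delta]$ by Corollary \ref{Corollary constants cycle time}, the change $|g(\beta_{c_a^y} t^{*}_{c_b^y}) - g(\beta_{c_b^y} t^{*}_{c_b^y})|$ is bounded by a constant multiple of $\delta/t^{*}_{c_b^y}$.

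For the joint-order saving, observe that in $S'$ the two synchronized series $F_{\beta_{c_a^y} t^{*}_{c_a^y}}$ and $F_{\beta_{c_a^y} t^{*}_{c_b^y}}$ inherit the rational ratio $t^{*}_{c_a^y}/t^{*}_{c_b^y}$ from the discrete construction, so by \eqref{LCM2IJR}, $IJR_{S'} = 1/(\beta_{c_a^y} \cdot LCM(t^{*}_{c_a^y}, t^{*}_{c_b^y}))$. Under $S^{*}$ the cycle-time ratio is scaled by the non-unit factor $\beta_{c_a^y}/\beta_{c_b^y}$: if irrational, $IJR_{S^{*}} = 0$; if equal to $p/q$ in lowest terms, then $|p/q - 1| \le \delta$ combined with $\gcd(p, q) = 1$ and $p \ne q$ force $q \ge 1/\delta$, blowing up $LCM_{S^{*}}$ far beyond the discrete LCM. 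Either way the pairwise $IJR(F_{t_{c_a^y}}, F_{t_{c_b^y}})$ is at least halved in $S^{*}$ versus $S'$, and the identity $UJR(F_1, F_2) = UJR(F_1) + UJR(F_2) - IJR(F_1, F_2)$ together with the monotonicity $IJR(F_1, F_2, F_3) \le IJR(F_1, F_2)$ from the $UJR/IJR$ properties above yield a net drop in the total $UJR$ of at least $\Omega(1/LCM(t^{*}_{c_a^y}, t^{*}_{c_b^y}))$ when moving from $S^{*}$ to $S'$.

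The choice $\delta = 1/(6n(\overline{p}_n)^{6})$ in \eqref{eq: delta} is engineered so that $\delta$ is far below $1/LCM(t^{*}_{c_a^y}, t^{*}_{c_b^y})$---the discrete LCM is polynomial in the primes $\overline{p}_n$, whereas $\delta^{-1}$ scales as $(\overline{p}_n)^{6}$---so the joint-order saving strictly dominates the $O(\delta/t^{*}_{c_b^y})$ EOQ perturbation, yielding $TC(S') < TC(S^{*})$ and the required contradiction. The main obstacle I anticipate is the inclusion--exclusion bookkeeping in the joint-order analysis: resetting $\beta_{c_b^y}$ simultaneously shifts every $IJR(F_{t_{c_b^y}}, \cdot)$ with variables, clauses, and other constants, and I must verify via the $UJR/IJR$ identities that these higher-order shifts cannot overturn the pairwise gain from $(c_a^y, c_b^y)$. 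A secondary subtlety is ruling out the degenerate case $\beta_{c_a^y} t^{*}_{c_a^y} = \beta_{c_b^y} t^{*}_{c_b^y}$, which follows from the integrality of the discrete $t^{*}$ values combined with the extreme smallness of $\delta$.
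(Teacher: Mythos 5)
Your overall strategy (an exchange argument by contradiction, convexity of the EOQ term, the $1/LCM$ quantification of synchronization, and using $\delta$ to separate two cost scales) matches the paper's, but the specific exchange you propose has a genuine gap. You reset the seed of a \emph{single} commodity $c_b^y$ from $\beta_{c_b^y}$ to $\beta_{c_a^y}$. In $S^{*}$, however, $c_b^y$ may share its seed with other constants, and with each such constant $c$ it is synchronized at rate $1/(\beta_{c_b^y}\,LCM(t^{*}_{c_b^y},t^{*}_{c}))$, which is $\Omega(1/(\overline{p}_n)^{6})$ and can be far larger --- the same order as (or larger than) the gain you claim from newly synchronizing with $c_a^y$. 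Your move destroys all of that within-class synchronization, so the net change in $UJR$ need not be negative; for instance, if $LCM(t^{*}_{c_b^y},t^{*}_{c})$ is much smaller than $LCM(t^{*}_{c_a^y},t^{*}_{c_b^y})$, the exchange is a strict loss. The ``inclusion--exclusion bookkeeping'' you flag as an anticipated obstacle is exactly where this breaks, and it is not bookkeeping but structure: the pairwise gain and the third-party losses live on the same scale, so no choice of $\delta$ can separate them, and the ``transitivity on pairs'' conclusion does not follow.

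The paper repairs precisely this by partitioning the constants into classes $A_1,\dots,A_k$ by seed and moving the \emph{entire} class $A_2$ to the seed $\beta_1$ of $A_1$ in one step. Then no within-class synchronization is lost; the only $IJR$ terms that change adversely are cross-class ones, i.e.\ between series with \emph{different} seeds, and Lemma \ref{lemma:IJRbound} bounds each of those by $\delta$ (your observation that $\gcd(p,q)=1$, $p\neq q$, and $|p/q-1|\le\delta$ force $q\ge 1/\delta$ is the heart of that lemma). Summing at most $n$ such $\delta$-sized losses against the guaranteed same-seed gain $IJR((A_2,\beta_1),(A_1,\beta_1))>1/(2(\overline{p}_n)^{6})$ is what makes the choice $\delta=1/(6n(\overline{p}_n)^{6})$ suffice. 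A secondary point: the paper moves $A_2$ to the \emph{smallest} seed $\beta_1$, which guarantees the standalone cost does not increase (it is closest to the unconstrained optimum $\beta=1$); with your arbitrary target seed you must separately absorb an EOQ increase, which your $O(\delta/t^{*}_{c_b^y})$ bound could handle but which is unnecessary. If you replace the single-commodity move by the class move, your argument essentially becomes the paper's proof.
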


\begin{proof}
We will prove the Theorem \ref{theorem:same seeds} by contradiction. To those means, we will assume that there exists an optimal solution $S$ that has at least two different seeds ($\beta's$).\\
Let $S$ be such a solution, i.e., an optimal solution where the commodities are centered around $k$ different seeds$ \beta_{1} \le \beta_{2} \le \dots \le \beta_{k}$. We denote a subset of the commodities that are centered around the same seed $\beta_{i}$ in $S$ as $A_{i}$. That is, ${c_i^y} \in A_{i}$ iff $t_{c_i^y} = t^*_{c_i^y}\beta_{i}$. In what follows, we compare solution $S$ with some solution $S'$, which is similar to $S$ with one difference: the cycle times of the commodities in set $A_{2}$ in $S'$ are changed to be centered around seed $\beta_{1}$ instead of $\beta_{2}$, which means that $\forall {c_i^y}\in A_{2}:t_{c_i^y} = t_{c_i^y}^{*}\beta_{1}$. We proceed to show that $S'$ is cheaper than $S$, which contradicts the optimality of $S$.

We separate the cost functions of $S$ and $S'$ to the total standalone costs and the joint replenishment costs and compare them separately.\\
According to Eq. (\ref{SAC}), the optimal solution to $\theta_1$, (which is the standalone cost of commodity ${c_i^y} \in \emph{Constants}$ is: $t_{c_i^y}=t^*_{c_i^y}$. Since the standalone cost is convex, the closer the seed $\beta_c$ is to 1, the cheaper the standalone cost is. The difference between $S$ and $S'$ is in set $A_2$ whose seed was changed from $\beta_2$ to $\beta_1$. Since $1\leq\beta_1<\beta_2$ by definition, we have the following corollary:
\begin{corollary}
The cumulative standalone cost of all commodities in $S'$ is smaller than that in $S$.
\end{corollary}

Next, we show that the cumulative joint replenishment cost in $S'$ is smaller than in $S$.

The cost of joint replenishment is linearly dependent on the average number of joint replenishments per time unit, 
or, in terms of $UJR$ functions, we'd like to show that $UJR_{S'} - UJR_{S} \leq 0$, where $UJR_{x}$ is short for the $UJR$ function on all the commodities of an arbitrary solution $x$.
Let us represent the time series of commodities that share the same seed, $\beta_i$, as a pair $(A_i,\beta_i)$. That is, $(A_i,\beta_i)=\bigcup\limits_{c_I^y \in A_i} F_{\beta_i \cdot t^*_{c_i^y}}.$
The expressions for the $UJR_{S'}$ and $UJR_{S}$, respectively, are:
{\begin{multline}
\label{EQ:UJRS'}
UJR_{S'} = UJR((A_1,\beta_1),(A_2,\beta_1),(A_3,\beta_3),...,(A_n,\beta_n)) =
\\ =
\underbrace{UJR((A_1,\beta_1),(A_3,\beta_3),(A_4,\beta_4),...,(A_n,\beta_n))}_{S'_1}+\underbrace{UJR((A_2,\beta_1))}_{S'_2}\\
- \underbrace{IJR((A_2,\beta_1), \{(A_1,\beta_1)\cup (A_3,\beta_3)\cup (A_4,\beta_4)\cup ...\cup (A_n,\beta_n)\})}_{S'_3},
\end{multline}}
\leavevmode
{\begin{multline}
\label{EQ:UJRS}
UJR_{S} = UJR((A_1,\beta_1),(A_2,\beta_2),(A_3,\beta_3),...,(A_n,\beta_n)) =
\\ = \underbrace{UJR((A_1,\beta_1),(A_3,\beta_3),(A_4,\beta_4),...,(A_n,\beta_n))}_{S_1}+\underbrace{UJR((A_2,\beta_2))}_{S_2}\\
-\underbrace{IJR((A_2,\beta_2), \{(A_1,\beta_1)\cup (A_3,\beta_3)\cup (A_4,\beta_4)\cup ...\cup (A_n,\beta_n)\})}_{S_3}.
\end{multline}}
To simplify the tractability, we refer to the elements of $UJR_{S'}$ and $UJR_{S}$ in Eqs. (\ref{EQ:UJRS'}) and (\ref{EQ:UJRS}) by their underlined notations, $S'_i$ and $S_i$ for $i=1,2,3.$
In what follows, we examine the difference $UJR_{S'} - UJR_{S}=\sum_{i=1}^3 \left(S'_i-S_i \right )$, and show that it is negative.

$S'_1=S_1$ and can both be omitted.

$S'_2-S_2$ is positive, since in $S'$ we allow the commodities in $A_2$ to be ordered more often.
However, Since $(A_2,\beta_2))$ and $(A_2,\beta_1)$ represent the same time series multiplied by different constants, we have $UJR((A_2,\beta_2))=UJR((A_2,\beta_1))\frac{\beta_2}{\beta_1}$. Therefore,
\begin{equation}
\label{EQ B_E}
S'_2-S_2=UJR((A_2,\beta_1)) \frac{\beta_{1} - \beta_{2}}{\beta_{1}\beta_{2}} \leq UJR((A_2,\beta_1)) \left (\beta_{1} - \beta_{2} \right )\leq \delta \cdot UJR((A_2,\beta_1)) \leq \delta,
\end{equation}
where the first inequality comes from both $\beta_1 \geq 1$ and $\beta_2 >1$, the second inequality comes from the fact that $1 \leq \beta_{1}<\beta_{2} \leq 1+\delta$ ,and the last inequality comes from the fact that all the elements in $A_2$ are integers and $\beta_1 \geq 1$ and thus, there could not be more than one order per period.



Now we shall decompose $S'_3$ and $S_3$ parts of Eqs. (\ref{EQ:UJRS'}) and (\ref{EQ:UJRS}), respectively, by the rule introduced in the third characteristic of $UJR$ and $IJR$ we discussed above. Also, remember that an upper bound on $S_3$ and a lower bound on $S'_3$ are good enough:
\begin{multline}
\label{eq:C}
S'_3 = IJR((A_2,\beta_1), \{(A_1,\beta_1),(A_3,\beta_3),(A_4,\beta_4),...,(A_n,\beta_n)\})  \\ = IJR((A_2,\beta_1), (A_1,\beta_1)) + IJR( (A_2,\beta_1), \{(A_3,\beta_3),(A_4,\beta_4),...,(A_n,\beta_n)\}) \\ -
IJR( (A_2,\beta_1), (A_1, \beta_1), \{(A_3,\beta_3),(A_4,\beta_4),...,(A_n,\beta_n)\}) \\ \geq IJR((A_2,\beta_1), (A_1,\beta_1)),
\end{multline}

where the inequality follows the second characteristic of $UJR$ and $IJR$ we discussed above.

An upper bound on $S_3$ could be found by decomposing it according to the third characteristic of the $UJR$ and $IJR$ functions, and ignoring the negative elements. That is:
\begin{multline}
\label{eq:F}
S_3 = IJR((A_2,\beta_2), \{(A_1,\beta_1),(A_3,\beta_3),(A_4,\beta_4),...,(A_n,\beta_n)\})  \\  \leq \sum_{i=1,3,4,...,n} IJR((A_2,\beta_2), (A_i,\beta_i)).
\end{multline}

Considering Eqs. (\ref{EQ:UJRS'})-(\ref{eq:F}), we have:

\begin{equation}
\label{eq:simplediff}
\begin{gathered}
UJR_{S'} - UJR_{S}=-IJR((A_2,\beta_1), (A_1,\beta_1))
+ \delta+ \sum_{i=1,3,4,...,n} IJR((A_2,\beta_2), (A_i,\beta_i)).
\end{gathered}
\end{equation}

We'll divide the expression into its composing parts and bound each of them separately.
Note that $(A_2,\beta_1)$ and $(A_1,\beta_1)$ share the same seed. This means we can create a lower bound on their $IJR$ that is independent of $\delta$. The worst case scenario would be both groups consisting of exactly one commodity each (since adding any commodities would only increase the $IJR$), and those two commodities having the longest cycle time.
According to \cite{CY2018} the longest cycle time is associated with commodities of type $\emph{Clauses}$, where for ${c^z_i} \in \emph{Clauses}$, $t^{*}_{c^z_i}$
is the product of the primes associated with the clause's three literals. Hence, $\forall{i}:t^{*}_{c^z_i}<\max \left \{ \left (\overline{p}_i \right)^3 \right \}=\left (\overline{p}_n \right)^3$. This means, 
\begin{equation}
\label{Eq tmax}
IJR((A_2,\beta_1), (A_1,\beta_1))>\frac{1}{\beta_1\left (\overline{p}_n \right)^6}>\frac{1}{(1+\delta)\left (\overline{p}_n \right)^6}>\frac{1}{2\left (\overline{p}_n \right)^6}.
\end{equation}

To bound a term $IJR((A_2,\beta_2), (A_i,\beta_i))$ we would first need to prove an additional lemma,
\begin{lemma}
\label{lemma:IJRbound} 
$IJR((A_{i}, \beta_{i}), (A_{j}, \beta_{j})) \leq \delta$
\end{lemma}

\begin{proof}
We distinguish between two cases:
\begin{itemize}
\item $\frac{\beta_{j}}{\beta_{i}}$ is irrational. \\
In this case, $IJR((A_{i}, \beta_{i}), (A_{j}, \beta_{j})) \rightarrow 0$, since there will be a maximum of one instance of joint replenishment in an infinite time horizon, and the replenishments will never coincide again.
\item $\frac{\beta_{j}}{\beta_{i}}$ is rational. \\
According to the fourth characteristic of $UJR$ and $IJR$, we have $IJR((A_{i}, \beta_{i}), (A_{j}, \beta_{j})) \leq IJR(F_{\beta_{i}}, F_{\beta_{j}})$, as the series covered by a single commodity with a cycle time of $\beta_{i}$ is a superset of the series $(A_{i}, \beta_{i})$.
Hence, we consider $IJR(F_{\beta_{i}}, F_{\beta_{j}})$ as an upper bound on $IJR((A_{i}, \beta_{i}), (A_{j}, \beta_{j}))$.
Since $\frac{\beta_{j}}{\beta_{i}}$ is rational, we can express it as an irreducible fraction $1 + \frac{q}{r}$, where $q$ and $r$ are positive integers. \\
Using the $LCM$ function to quantify $IJR(F_{\beta_{i}}, F_{\beta_{j}})$, we can observe that
\begin{equation*}
LCM\left (\beta_i,\beta_{j} \right )=\beta_i LCM\left ( 1, 1+\frac{q}{r} \right ) = \frac{\beta_i}{r} LCM(r, r+q)=\frac{\beta_i}{r}(r(r+q))=\beta_i(r+q),
\end{equation*}
where the third equality is true because $r$ and $q$ share no common multipliers.\\
According to Eq. \eqref{LCM2IJR}, \[IJR(F_{\beta_{i}}, F_{\beta_{j}}) = \frac{1}{\beta_i(r+q)}\le \frac{1}{r+q}.\]
Now, let us consider the largest integer $a$, such that $\frac{1}{a} \geq \delta$
Since $\frac{\beta_j}{\beta_i}=1+\frac{q}{r}<1+\delta \leq 1+\frac{1}{a}$, we have $a\cdot q <r$ and thus,
\[
IJR((A_{i}, \beta_{i}), (A_{j}, \beta_{j}))\leq IJR(F_{\beta_{i}}, F_{\beta_{j}}) \leq \frac{1}{(1+a)q} \leq \frac{1}{a+1} < \delta,
\]
where the last inequality comes from the definition of $a$ as the largest integer for which $\delta \leq \frac{1}{a}$
\end{itemize}
This completes the proof of Lemma \ref{lemma:IJRbound}.

\end{proof}
\leavevmode

According to Lemma \ref{lemma:IJRbound}, $\sum_{i=1,3,4,...,n} IJR((A_2,\beta_2), ((A_j,\beta_j))\leq (n-1)\delta$.

Substituting this along with Eqs. \eqref{Eq tmax} and \eqref{eq: delta} into Eq. \eqref{eq:simplediff} we get:
\[
UJR_{S'} - UJR_{S} \leq \frac{-1}{2\left (\overline{p}_n \right)^6} + n\delta = \frac{-3n}{6n\left (\overline{p}_n \right)^6} + \frac{n}{6n\left (\overline{p}_n \right)^6} =\frac{-1}{3\left (\overline{p}_n \right)^6}< 0.
\]
This contradicts the optimality assumption of $S$ and proves Theorem \ref{theorem:same seeds}.
\end{proof}


\subsection{\emph{Variables} cycle times} \label{Sec: Var}
Theorem \ref{theorem:same seeds} establishes that the processing of all commodities of type \emph{Constants} share the same seed $\beta$. Normalizing the time by $\beta$, all these commodities are ordered at the same discrete cycle times as in \cite{CY2018}.
Next, we show that the cycle times of commodities of type \emph{Variables} are also restricted to the same discrete cycle times (when time is normalized by $\beta$) as in \cite{CY2018}.
\begin{theorem}
\label{theorem: Variables}
In an optimal solution to $\Gamma$, $\forall {c_i^x} \in \emph{Variables}:t_{c^x_i}\in \left \{ \underline{p}_{i},\overline{p}_{i}\right \}.$
\end{theorem}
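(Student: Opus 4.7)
The plan is to mirror the contradiction argument of Theorem~\ref{theorem:same seeds}. By Theorem~\ref{theorem:same seeds} (and its analogue for \emph{Clauses}) all ``rigid'' commodities share a common seed $\beta$, so their cycle times lie on the lattices $\{\beta t^{*}_{c_j^y}\}$ and $\{\beta t^{*}_{c_j^z}\}$. Suppose for contradiction that an optimal solution $S$ has some $c_i^x \in \emph{Variables}$ with $t_{c_i^x} \notin \{\beta\underline{p}_i,\ \beta\overline{p}_i\}$, and construct $S'$ by replacing $t_{c_i^x}$ with whichever of $\beta\underline{p}_i$ or $\beta\overline{p}_i$ yields the smaller total cost. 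Decompose $TC(S)-TC(S')$ into (a) the change in the standalone EOQ cost $g(t_{c_i^x})$ and (b) the change in the joint replenishment cost, measured through the $UJR$ and $IJR$ machinery.

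For step (a), set up two auxiliary EOQ problems as in Section~\ref{Sec: Const and Claus}: $\theta_1$ with $(h_{c_i^x}, K_{c_i^x})$ and $\theta_2$ with $(h_{c_i^x}, K_{c_i^x}+K_0)$. Plugging Eqs.~\eqref{DEF vars h}--\eqref{DEF vars K} into Eq.~\eqref{EOQ optima solution} shows that the standalone optimum $t_1=\sqrt{K_{c_i^x}/h_{c_i^x}}$ lies strictly inside $(\underline{p}_i,\overline{p}_i)$ and that $t_2$ differs from $t_1$ only by an amount controlled by $K_0/K_{c_i^x}$, which is tiny because $K_{c_i^x}$ scales with $\underline{p}_i^2 h_{c_i^x}$. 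Hence in any optimal solution $t_{c_i^x}/\beta$ is confined to a narrow band containing $[\underline{p}_i,\overline{p}_i]$. The parameter calibration inherited from \cite{CY2018} makes $g(\underline{p}_i)=g(\overline{p}_i)$, and convexity of $g$ bounds the standalone penalty of moving to a prime endpoint by a small $\Delta_g$ equal to the convexity gap on $[\underline{p}_i,\overline{p}_i]$.

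For step (b), when $t_{c_i^x}=\beta\underline{p}_i$ or $\beta\overline{p}_i$, the ratio $t_{c_i^x}/(\beta t^{*}_{c_j^y})$ is rational with small denominator, so by Eq.~\eqref{LCM2IJR} the pairwise term $IJR = 1/(\beta\, LCM(\underline{p}_i, t^{*}_{c_j^y}))$ consolidates many joint orders; moreover, since $\underline{p}_i, \overline{p}_i$ are prime and (by the construction of the \emph{Constants} and \emph{Clauses}) do not divide any $t^{*}_{c_j^y}$, one has full control of these $LCM$s. By contrast, for $t_{c_i^x}$ outside $\{\beta\underline{p}_i, \beta\overline{p}_i\}$, either $t_{c_i^x}/\beta$ is irrational and the corresponding $IJR$ vanishes, or it is a rational in lowest terms with large denominator, in which case Lemma~\ref{lemma:IJRbound} (applied after observing that $t_{c_i^x}/\beta$ differs from both $\underline{p}_i$ and $\overline{p}_i$ by more than any $1/a$ with $1/a \geq \delta$ permits) makes the $IJR$ negligible. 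Multiplying the joint-frequency gain by $K_0=1$ and comparing with $\Delta_g$ from step (a) yields $TC(S')<TC(S)$, contradicting the optimality of $S$.

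The main obstacle, absent from the \emph{Constants} proof, is that the standalone EOQ optimum for a \emph{Variables} commodity lies strictly inside $(\underline{p}_i,\overline{p}_i)$, so both candidate values $\beta\underline{p}_i$ and $\beta\overline{p}_i$ are genuinely suboptimal for the standalone component; unlike in Theorem~\ref{theorem:same seeds}, the standalone change is not automatically favorable. The quantitative heart of the proof is therefore to show that the joint replenishment saving strictly dominates $\Delta_g$. This hinges on exploiting (i) the CY2018 calibration of $\alpha_c, \overline{\alpha}_v, b_i$ in Eqs.~\eqref{DEF vars h}--\eqref{DEF vars K}, which was engineered precisely so that $\underline{p}_i$ and $\overline{p}_i$ sit on the same iso-cost level of $g$ with a very flat gap between them, and (ii) the choice of $\delta$ in Eq.~\eqref{eq: delta}, which makes any $t_{c_i^x}$ failing to align with a $\beta p_i$-lattice point lose an $IJR$ contribution far exceeding $\Delta_g$.
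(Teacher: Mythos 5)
Your high-level framing --- trade the standalone penalty of moving to a prime endpoint against the joint-replenishment saving from synchronizing with the \emph{Constants} --- is the right one and matches what the paper ultimately does, but two of your load-bearing steps do not hold as stated. First, the confinement argument in step (a) does not transfer from \emph{Constants} to \emph{Variables}. For a \emph{Constants} commodity the marginal joint cost per order is squeezed between $0$ and $K_0$ essentially uniformly in $t$, which is why Corollary~\ref{Corollary constants cycle time} pins the cycle time into a band of relative width $\delta$. For a \emph{Variables} commodity the marginal joint cost per order varies enormously with $t_{c_i^x}$ (that is the entire point of the construction), so the optimum is \emph{not} confined near the standalone optimum $t^*_{c_i^x}$; it is only confined to $\left(\underline{p}_i-1,\ \overline{p}_i+1\right)$, an interval of width more than $b_i\ge 2$. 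Your own phrasing is internally inconsistent here: a band whose width is ``controlled by $K_0/K_{c_i^x}$, which is tiny'' cannot contain $[\underline{p}_i,\overline{p}_i]$. The paper instead imports part 1 of the discrete proof (the standalone cost alone at $\underline{p}_i-1$ or $\overline{p}_i+1$ already exceeds the worst-case total cost at $\overline{p}_i$), which survives in the continuous setting by convexity of Eq.~\eqref{EOQ formula}.

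Second, and more seriously, step (b) misapplies Lemma~\ref{lemma:IJRbound}. That lemma bounds the $IJR$ of two families whose seeds both lie in $[1,1+\delta]$, i.e.\ whose ratio is $1+q/r$ with $q/r<\delta$; it says nothing about the synchronization of an arbitrary point of $(\underline{p}_i,\overline{p}_i)$ with the \emph{Constants}. Your dichotomy ``irrational, hence $IJR\to 0$, or rational with large denominator'' is false: when $b_i>1$ there are integers strictly between $\underline{p}_i$ and $\overline{p}_i$ (denominator $1$), and points such as $\underline{p}_i(k+1)/k$ can have a non-negligible pairwise $IJR$ with an individual \emph{Constants} commodity. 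What actually separates the two lattice points from everything else is that $\underline{p}_i$ and $\overline{p}_i$ each divide \emph{many} of the $t^*_{c_j^y}$ by construction, so the aggregate $IJR$ against the whole \emph{Constants} set is large exactly there; this is encoded in the quantitative bounds \eqref{Eq LB for t} and \eqref{Eq UB for t} imported from \cite{CY2018}, whose continuous validity the paper justifies via $LCM(a\beta_i,b\beta_j)\le LCM(a,b)$, so that introducing seeds cannot push synchronization above the discrete bound. Finally, the decisive comparison --- that the joint-replenishment gap dominates the standalone gap, i.e.\ $UB\left(\Delta_{c_i^x}\left(\beta\underline{p}_i\right)\right)<LB\left(\Delta_{c_i^x}\left(\beta(\underline{p}_i+y)\right)\right)$ for all $y\neq 0$ --- is asserted in your closing paragraph but never carried out; the paper performs this computation explicitly and shows the difference equals $\frac{1}{\beta}$ times the corresponding positive expression of \cite{CY2018}. (Also, $g(\underline{p}_i)=g(\overline{p}_i)$ is not exact: Eq.~\eqref{DEF vars K} subtracts $\frac{\underline{p}_i+b_i}{\underline{p}_i+b_i-1}\alpha_c\overline{\alpha}_v$ from the iso-cost value, and that correction term is precisely what must be balanced against the $jr$ difference.)
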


\begin{proof}
Our proof makes use of \cite{CY2018}, who proved that the cycle time of $c_i^x \in \emph{Variables}$ is one of two unique prime numbers $\underline{p}_{i}$ and $\overline{p}_{i}$. The standalone cost of these commodities, $t^*_{c^x_i}$ satisfies $\underline{p}_{i}<t^*_{c^x_i}<\overline{p}_{i}.$
Their proof was constructed of two parts
\begin{enumerate}
\item The cycle time of $c^x_i \in \emph{Variables}$ satisfies $t_{c^x_i} > \underline{p}_{i}-1$ and $t_{c^x_i} < \overline{p}_{i}+1$.
\item The cost of setting $t_{c^x_i} \in \left \{ \underline{p}_{i},\overline{p}_{i} \right \}$ dominates any other solution where $t_{c^x_i} \in \left (\underline{p}_{i},\overline{p}_{i}\right )$.
\end{enumerate}
The proof of the first part simply showed that just the standalone cost (without paying any joint replenishment costs) of a solution with $t_{c^x_i} \in \left \{ \underline{p}_{i}-1,\overline{p}_{i}+1 \right \}$ is more expensive than the worst case solution for $t_{c^x_i}=\overline{p}_{i}$. This proof is independent of the discrete environment and holds in a continuous environment as well, and due to the convexity of the standalone cost, that means that in an optimal solution of $\Gamma$, $t_{c^x_i} \in \left (\underline{p}_{i}-1,\overline{p}_{i}+1 \right )$.\\
The second part of their proof refers to the very high synchronization between the cycle times of commodities of type \emph{Constants} with both $\underline{p}_{i}$ and $\overline{p}_{i}$.
\cite{CY2018} constructed their $\mathcal{NP}$-hard instance by setting the optimal cycle times of ${c^y_i} \in \emph{Constants}$ to be a multiplication of each of the primes $\underline{p}_{i}$ and $\overline{p}_{i}$ with a very large set of other primes (for each such multiplication, a commodity of type \emph{Constants} was created).
Accordingly, when setting $t_{c^x_i} \in \{ \underline{p}_{i},\overline{p}_{i} \}$ the marginal cost added to the total cost due to joint replenishments is very small.
They show that even when assuming maximum synchronization between the commodities of type \emph{Variables}, for any solution that is not $\underline{p}_{i}$ or $\overline{p}_{i}$ but still within the range $t_{c^x_i} \in \left (\underline{p}_{i},\overline{p}_{i} \right )$,
and assuming the optimal standalone costs for these commodities, the marginal cost incurred by these commodities is still greater than the marginal cost of setting $t_{c^x_i} \in \left \{ \underline{p}_{i},\overline{p}_{i} \right \}$.
Next, we show that their bound on the synchronization between the cycle time assuming $t_{c^x_i} \in \left (\underline{p}_{i},\overline{p}_{i} \right )$ still holds in a continuous setting.
Using our representation of continuous times as a multiplication of a discrete time and a seed. we refer to a continuous cycle time as $t_{c^x_i}\beta_i$. We may assume $1 \leq \beta_i<\frac{1}{t_{c^x_i}}$ (this is enough to allow us to cover any continuous value).\\
Considering two cycle times, $t_{c^x_i}\beta_i$ and $t_{c^x_j}\beta_j$, where $\beta_i\leq \beta_j$ (without loss of generality).
The rate of synchronization of their cycle times $IJR\left (\left( \{t_{c^x_i}\},\beta_i\right),\left(\{t_{c^x_j}\},\beta_j \right) \right )$, is maximized by minimizing $LCM(t_{c^x_i}\beta_i,t_{c^x_j}\beta_j)$. The LCM function is minimized by finding the largest common multiplier of the two numbers. For any seed, the largest common multiplier is itself, and the smaller the seed, the smaller the LCM. Thus, 
\begin{equation*}
    LCM\left (t_{c^x_i}\beta_i,t_{c^x_j}\beta_j \right) \leq LCM\left (t_{c^x_i}\beta_i,t_{c^x_j}\beta_i\right) \leq LCM\left(t_{c^x_i},t_{c^x_j}\right).
\end{equation*}

In other words, a common minimal seed gives us a lower bound to the synchronization of any two numbers. The lower bound used in \cite{CY2018} is in fact a lower bound with a common seed of 1 and thus, the bound holds for a continuous setting as well.

Moreover, it applies also to any cycle time in the continuous ranges $\left (\underline{p}_{i}-1,\underline{p}_{i} \right ]$ and $\left [\overline{p}_{i},\overline{p}_{i}+1 \right )$ not including  $\beta \overline{p}_{i}$.
The upper bound on the marginal cost of a solution with $t_{c_i^x} \in \left \{ \beta \underline{p}_{i},\beta \overline{p}_{i} \right \}$ also holds as all commodities share the same seed $\beta$ and are practically discrete under a time normalized by $\beta$.

To simplify our analysis, we define the function $\Delta_{c}\left(
t_{c}\right)$ that describes the marginal average cost per time unit associated
with commodity $c$'s cycle time, $t_{c}$, to the other
commodities in the system. We denote a lower and an upper bound on any
general function $f$ by $LB\left( f\right) $ and $UB\left(  f\right),$
respectively. Accordingly, $LB\left( \Delta_{c}\left( t_{c}\right)
\right)$ and $UB\left( \Delta_{c}\left( t_{c}\right) \right)$ are
lower and upper bounds on $\Delta_{c}\left( t_{c}\right),$ respectively.
Thus,
the marginal contribution of a commodity $c_i^x \in \emph{Variables}$ to the commodities $c_i^y \in \emph{Constants}$ is composed of its standalone cost given by Eq. \eqref{EOQ formula} and its marginal contribution to the joint replenishment cost, denoted:
\[
jr^{c_i^x}(t_{c_i^x}):=UJR\left( F_{t_{c_i^x}},\left(\emph{Constants},\beta\right)\right)-UJR\left(\left(\emph{Constants},\beta\right)\right).\]
According to the first characteristic of $UJR$ and $IJR$: 
\[jr^{c_i^x}(t_{c_i^x})= UJR\left( F_{t_{c_i^x}}\right)-IJR\left( F_{t_{c_i^x}},\left(\emph{Constants},\beta\right)\right).
\]
According to \cite{CY2018}, for $t_{c_i^x}\neq \beta \underline{p}_{i}$
\begin{equation}
\label{Eq LB for t}
jr^{c_i^x}(t_{c_i^x}) \ge \frac{K_{0}\left( \alpha_{v}\alpha_{n}\right) }{t_{c_i^x}} \ge \frac{K_{0}\left( \alpha_{v}\alpha_{n}\right) }{\beta t_{c_i^x}},
\end{equation}

and for $t_{c_i^x} \in \left \{ \beta \underline{p}_{i}, \beta \overline{p}_{i} \right \}$ 
\begin{equation}
\label{Eq UB for t}
jr^{c_i^x}(t_{c_i^x}) \le \frac{K_{0}\left( \alpha_c \right )  }{\beta t_{c_i^x}},
\end{equation}

where $\alpha_{v},\ \alpha_{n},$ and $\alpha_c$ are constants taken from \cite{CY2018}, which we decided to leave as is for ease of validation.\\
Next, we show that $UB\left( \Delta_{c_{i}^{x}}\left( \beta \underline{p}%
_{i}\right) \right) <LB\left( \Delta_{c_{i}^{x}}\left( \beta (\underline{p}%
_{i}+y)\right)  \right),$ where $y \neq 0$.
Using Eq. \eqref{Eq LB for t} with $t_{c_{i}^{x}}=\beta (\underline{p}_{i}+y)$, Eq. \eqref{Eq UB for t} with $t_{c_{i}^{x}}=\beta \underline{p}_{i}$, and Eq. \eqref{EOQ formula}, we get:
\begin{align}
&  LB\left(  \Delta_{c_{i}^{x}}\left(  \underline{p}_{i}+y\right)  \right)
-UB\left(  \Delta_{c_{i}^{x}}\left(  \underline{p}_{i}\right)  \right)
\nonumber\\
&  =\frac{K_{c_{i}^{x}}+K_{0}\left(  \alpha_{v}\alpha_{n}\right)  }{\beta \left (\underline
{p}_{i}+y \right )}+\beta h_{c_{i}^{x}}\left(  \underline{p}_{i}+y\right)  -\left(
\frac{K_{c_{i}^{x}}}{\beta \underline{p}_{i}}+\beta \underline{p}_{i}h_{c_{i}^{x}}%
+K_{0}\cdot\frac{\alpha_{c}}{\beta \underline{p}_{i}}\right) \nonumber\\
&  =\frac{-yK_{c_{i}^{x}}}{\beta \underline{p}_{i}\left(  \underline{p}%
_{i}+y\right)  }+\beta yh_{c_{i}^{x}}+ K_0 \left ( \frac{\alpha_{v}\alpha_{n}}{\beta \left(\underline{p}_{i}
+y\right)}-\frac{\alpha_{c}}{\beta \underline{p}_{i}} \right ). \label{EQ LB(p+ -1) -UB(p+) with p+}%
\end{align}
Substituting for $h_{c_{i}^{x}},$ $K_{c_{i}^{x}}$ and $K_{0}$ using Eqs.
$\left(  \ref{DEF vars h}\right)  $, $\left(  \ref{DEF vars K}\right)  $ and
$\left(  \ref{DEF K_0}\right)  $ into Eq. $\left(
\ref{EQ LB(p+ -1) -UB(p+) with p+}\right)  $ we get:%
\begin{align*}
&  LB\left(  \Delta_{c_{i}^{x}}\left(  \underline{p}_{i}+y\right)  \right)
-UB\left(  \Delta_{c_{i}^{x}}\left(  \underline{p}_{i}\right)  \right) \\
&  =\frac{-yh_{c_{i}^{x}}\cdot\underline{p}_{i}\left(  \underline{p}_{i}%
+b_{i}\right)  +y\frac{\underline{p}_{i}+b_{i}}{\underline{p}_{i}+b_{i}%
-1}\alpha_{c}\overline{\alpha}_{v}}{\beta \underline{p}_{i}\left(  \underline{p}%
_{i}+y\right)  }+\beta yh_{c_{i}^{x}}+\frac{\alpha_{v}\alpha_{n}}{\beta \left(\underline{p}_{i}+y\right)}-\frac{\alpha_{c}}{\beta \underline{p}_{i}}\\
&  =-yh_{c_{i}^{x}}\left(\frac{\underline{p}+b_{i}}{\beta\left(\underline{p}_{i}+y\right)}-\beta\right)+\frac{y\left(
\underline{p}_{i}+b_{i}\right)  \alpha_{c}\overline{\alpha}_{v}}{\beta\underline
{p}_{i}\left(  \underline{p}_{i}+y\right)  \left(  \underline{p}_{i}%
+b_{i}-1\right)  }+\frac{\alpha_{v}\alpha_{n}}{\beta \left(\underline{p}_{i}+y\right)}-\frac{\alpha_{c}}{\beta \underline{p}_{i}}\\
&  =-\alpha_{c}y\frac{\underline{p}_{i}^{2}-b_{i}^{2}}{\underline{p}%
_{i}\left(  \underline{p}_{i}+\frac{b_{i}}{2}\right)  \frac{b_{i}}{2}}%
\left(\frac{\underline{p}+b_{i}}{\beta\left(\underline{p}_{i}+y\right)}-\beta\right)+\frac{y\left(  \underline{p}_{i}%
+b_{i}\right)  \alpha_{c}\overline{\alpha}_{v}}{\beta \underline{p}_{i}\left(
\underline{p}_{i}+y\right)  \left(  \underline{p}_{i}+b_{i}-1\right)  }%
+\frac{\alpha_{v}\alpha_{n}}{\beta \left(\underline{p}_{i}+y\right)}-\frac{\alpha_{c}}{\beta \underline{p}_{i}}\\
&  =\frac{-\alpha_{c}}{\beta\left(\underline{p}_{i}+y\right)}\left(  y\frac{\underline{p}%
_{i}^{2}-b_{i}^{2}}{\underline{p}_{i}\left(  \underline{p}_{i}+\frac{b_{i}}%
{2}\right)  \frac{b_{i}}{2}}\left(\underline{p}_{i}(1-\beta^2)+ b_{i}-\beta y\right)  -\frac{y\left(
\underline{p}_{i}+b_{i}\right)  \overline{\alpha}_{v}}{\underline{p}%
_{i}\left(  \underline{p}_{i}+b_{i}-1\right)  }+\frac{\left(  \underline
{p}_{i}+y\right)  }{\underline{p}_{i}}\right)  +\frac{\alpha_{v}\alpha_{n}}{\beta \left(\underline{p}_{i}+y\right)}\\
&  >\frac{-\alpha_{c}}{\beta \left(\underline{p}_{i}+y\right)}\left(  \frac{2y\left(\underline{p}_{i}(1-\beta^2)+ b_{i}-\beta y\right)  }{b_{i}}+2\right)  +\frac{\alpha_{v}\alpha_{n}}{\beta \left(\underline{p}_{i}+y\right)}\\
&  >\frac{1}{\beta}\left(\frac{-\alpha_{c}}{\underline{p}_{i}+y}\left(  \frac{2y\left(
b_{i}-y\right)  }{b_{i}}+2\right)  +\frac{\alpha_{v}\alpha_{n}}{\underline{p}%
_{i}+y}\right ).
\end{align*}
The last term is identical to the term for $LB\left( \Delta_{c_{i}^{x}}\left( \underline{p}_{i}+y\right) \right)
-UB\left( \Delta_{c_{i}^{x}}\left( \underline{p}_{i}\right) \right)$ in \cite{CY2018}\footnote {See proof for Claim 3 in Appendix D of \cite{CY2018}}
multiplied by a positive constant $\frac{1}{\beta}$, which does not affect the remainder of the proof in \cite{CY2018}, showing that this term is positive.
\end{proof}

\subsection{Optimal solution}
In this section, we trace the two proofs in \cite{CY2018} mentioned in \secref{Sec: Proof sketch} and show that they are valid for $t_{c_{i}^{x}}\in\left\{ \beta \underline{p}_{i},\beta \overline{p}_{i}\right\} $ and $t_{c^y_i}=\beta t^*_{c^y_i}$ instead of $t_{c_{i}^{x}}\in\left\{ \underline{p}_{i},\overline{p}_{i}\right\} $ and $t_{c^y_i}= t^*_{c^y_i}$, respectively.\\
We denote $\Delta_{c_{i}^{x}}^{TC_{\emph{Variables}}}\left( t_{c_{i}^{x}%
},S\right)$ as the marginal average periodic cost added to the function
$TC_{\emph{Variables}}\left( S\right) $ associated with commodity $c_i^x$'s
cycle time $t_{c_{i}^{x}}$, where $c_{i}^{x}\in\emph{Variables}$\emph{,
}$t_{c_{i}^{x}}\in\left\{ \beta \underline{p}_{i},\beta \overline{p}_{i}\right\}$ and a
solution $S$ that applies the characteristics of an optimal solution in
\secref{Sec: Const and Claus} and \secref{Sec: Var} to the other
commodities in the system. In the next lemma, we formulate bounds on
$\Delta_{c_{i}^{x}}^{TC_{\emph{Variables}}}\left( t_{c_{i}^{x}},S\right)$.

\begin{lemma} \label{Lemma Delta p_i}
For any solution $S$ that satisfies the condition of Theorem \ref{theorem: Variables}, $\Delta_{c_{i}^{x}}^{TC_{\emph{Variables}}}\left( \beta \underline{p}_{i},S\right) <\Delta_{c_{i}^{x}}^{TC_{\emph{Variables}}}\left(  \beta \overline{p}_{i},S\right)$
\end{lemma}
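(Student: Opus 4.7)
The plan is to leverage the closing calculation in the proof of Theorem \ref{theorem: Variables} with the specialisation $y = b_i$ (so that $\underline{p}_i + y = \overline{p}_i$), and then to absorb the one structural difference between $\Delta_{c_i^x}$ as used there and $\Delta^{TC_{\emph{Variables}}}_{c_i^x}$ as used here: the latter counts the marginal joint replenishment of $c_i^x$ against the full set of other commodities in $S$, whereas the former counted it against the \emph{Constants} only. Concretely, I would write $\Delta^{TC_{\emph{Variables}}}_{c_i^x}(t, S) = g(t) + \mathrm{MJR}(t, S)$, where $g$ is the EOQ standalone cost of Eq. \eqref{EOQ formula} and $\mathrm{MJR}(t, S) := UJR(F_t) - IJR\bigl(F_t,\, (\emph{Constants}, \beta) \cup (\emph{Variables}\setminus\{c_i^x\})\bigr)$; applying the first and third characteristics of $UJR$/$IJR$ from Section \ref{Sec: Const and Claus} gives the sandwich
\[
jr^{c_i^x}(t) - IJR\bigl(F_t,\, \emph{Variables}\setminus\{c_i^x\}\bigr) \;\leq\; \mathrm{MJR}(t, S) \;\leq\; jr^{c_i^x}(t),
\]
which lets me recycle the one-sided bounds \eqref{Eq LB for t} and \eqref{Eq UB for t} already established for $jr^{c_i^x}$.

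Feeding \eqref{Eq UB for t} at $t = \beta \underline{p}_i$ and \eqref{Eq LB for t} at $t = \beta \overline{p}_i$ into the sandwich yields
\[
\Delta^{TC_{\emph{Variables}}}(\beta \overline{p}_i, S) - \Delta^{TC_{\emph{Variables}}}(\beta \underline{p}_i, S) \;\geq\; LB\bigl(\Delta_{c_i^x}(\beta \overline{p}_i)\bigr) - UB\bigl(\Delta_{c_i^x}(\beta \underline{p}_i)\bigr) - IJR\bigl(F_{\beta \overline{p}_i},\, \emph{Variables}\setminus\{c_i^x\}\bigr),
\]
and the closing calculation of Theorem \ref{theorem: Variables}'s proof already shows, with $y = b_i$, that the first difference on the right is $1/\beta$ times the positive gap established in \cite{CY2018}. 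The residual $IJR$ term I would bound pairwise by $\sum_{j\neq i} 1/(\beta \overline{p}_i p_j^*)$ via the $LCM$ identity \eqref{LCM2IJR}, since $\overline{p}_i$ and each $p_j^* \in \{\underline{p}_j, \overline{p}_j\}$ are distinct primes by the construction of \cite{CY2018}.

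The main obstacle I anticipate is the final quantitative comparison: verifying that the gap inherited from Theorem \ref{theorem: Variables} strictly dominates this pairwise residual once the constants $\alpha_c$, $\alpha_v$, $\alpha_n$, and $\overline{\alpha}_v$ are unpacked from \cite{CY2018}. Since the discrete analogue of this lemma (the first bullet of step 4 in Section \ref{Sec: Proof sketch}) was proved in \cite{CY2018} using essentially the same $LCM$-based accounting of cross-terms, I expect no new structural idea is required beyond careful bookkeeping. The remainder of the proof (substituting $y = b_i$ into Eq. \eqref{EQ LB(p+ -1) -UB(p+) with p+}) is routine algebra.
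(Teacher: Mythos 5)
Your plan has a genuine gap, and it is not the one you flag. The step ``feed \eqref{Eq LB for t} at $t=\beta\overline{p}_i$'' is invalid: that lower bound, $jr^{c_i^x}(t)\ge K_0\alpha_v\alpha_n/(\beta t)$, is the bound for cycle times that are \emph{not} synchronized with the \emph{Constants}, whereas $\beta\overline{p}_i$ is by construction one of the two highly synchronized values --- which is exactly why \eqref{Eq UB for t} supplies the much smaller upper bound $K_0\alpha_c/(\beta t)$ at both $\beta\underline{p}_i$ and $\beta\overline{p}_i$. The two bounds are mutually inconsistent at $t=\beta\overline{p}_i$ unless \eqref{Eq LB for t} excludes that point: if both held there we would need $\alpha_v\alpha_n\le\alpha_c$, while the closing computation of Theorem \ref{theorem: Variables} that you want to specialize to $y=b_i$ is positive only when $\alpha_v\alpha_n$ exceeds roughly $2\alpha_c$. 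Worse, if your specialization were legitimate it would show that $\beta\overline{p}_i$ is dominated by a margin of order $\left(\alpha_v\alpha_n-2\alpha_c\right)/\overline{p}_i$, which would break the reduction itself: the construction requires the penalty for choosing $\overline{p}_i$ over $\underline{p}_i$ to be \emph{smaller} than the penalty of an unsynchronized clause (item 4(b) of the proof sketch). So the ``final quantitative comparison'' you defer as bookkeeping is in fact the point where the argument collapses; it cannot be closed from the cited bounds.

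The paper's proof takes a different and much lighter route: it never re-derives a quantitative gap. It writes $\Delta_{c_{i}^{x}}^{TC_{\emph{Variables}}}(t,S)=SAC^{c_i^x}(t)+jr^{c_i^x}(t)$, with the marginal joint-replenishment cost measured against the \emph{Constants} only, so your sandwich (and the residual $IJR$ term it creates) is not needed. It then takes the discrete inequality $\Delta_{c_{i}^{x}}^{TC_{\emph{Variables}}}(\underline{p}_i,S)<\Delta_{c_{i}^{x}}^{TC_{\emph{Variables}}}(\overline{p}_i,S)$ as a black box from Claim 4 of \cite{CY2018} and transfers it to the continuous setting via two monotonicity facts: convexity of the EOQ cost together with $\underline{p}_i<\beta\underline{p}_i<t^*_{c_i^x}<\overline{p}_i<\beta\overline{p}_i$ gives $SAC^{c_i^x}(\beta\underline{p}_i)\le SAC^{c_i^x}(\underline{p}_i)$ and $SAC^{c_i^x}(\beta\overline{p}_i)\ge SAC^{c_i^x}(\overline{p}_i)$; and the scaling $jr^{c_i^x}(\beta t)=\tfrac{1}{\beta}\,jr^{c_i^x}(t)$ together with $jr^{c_i^x}(\underline{p}_i)-jr^{c_i^x}(\overline{p}_i)>0$ and $\beta\ge 1$ gives $jr^{c_i^x}(\beta\overline{p}_i)-jr^{c_i^x}(\beta\underline{p}_i)\ge jr^{c_i^x}(\overline{p}_i)-jr^{c_i^x}(\underline{p}_i)$. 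Summing yields $\Delta_{c_{i}^{x}}^{TC_{\emph{Variables}}}(\beta\overline{p}_i,S)-\Delta_{c_{i}^{x}}^{TC_{\emph{Variables}}}(\beta\underline{p}_i,S)\ge\Delta_{c_{i}^{x}}^{TC_{\emph{Variables}}}(\overline{p}_i,S)-\Delta_{c_{i}^{x}}^{TC_{\emph{Variables}}}(\underline{p}_i,S)>0$, with no constants to unpack. To repair your proof, replace the use of \eqref{Eq LB for t} at $\beta\overline{p}_i$ with this convexity-plus-scaling transfer.
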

\begin{proof}
The function $\Delta_{c_{i}^{x}}^{TC_{\emph{Variables}}}\left(t_{c_i^x},S\right)$ is composed of two parts: the standalone cost of $c_i^x$, denoted $SAC^{c_i^x}(t_{c_i^x})$ and the marginal contribution to the joint replenishment cost, $jr^{c_i^x}(t_{c_i^x})$.
Thus:
\begin{equation} \label{SAC def}
\Delta_{c_{i}^{x}}^{TC_{\emph{Variables}}}\left(t_{c_i^x},S\right)=SAC^{c_i^x}(t_{c_i^x})+jr^{c_i^x}(t_{c_i^x}).
\end{equation}
We base our proof on \cite{CY2018},\footnote {See proof for Claim 4 in Appendix E of \cite{CY2018}.} which states that $\Delta_{c_{i}^{x}}^{TC_{\emph{Variables}}}\left(\underline{p}_{i},S\right) <\Delta_{c_{i}^{x}}^{TC_{\emph{Variables}}}\left(\overline{p}_{i},S\right)$.
Since for $c_i^x$, $\underline{p}_i <\beta \underline{p}_i < t^*_{c_i^x} <\overline{p}_i <\beta \overline{p}_i$, and due to the convexity of the cost function, we may infer that the standalone costs of $c_i^x$ satisfies: \begin{equation} \label{SAC underline p_i}
SAC^{c_i^x}(\underline{p}_i) \ge SAC^{c_i^x}(\beta \underline{p}_i),
\end{equation}
and
\begin{equation} \label{SAC overline p_i}
SAC^{c_i^x}(\overline{p}_i) \le SAC^{c_i^x}(\beta \overline{p}_i)
\end{equation}.

Assuming all commodities share the same $\beta$, $jr^{c_i^x}(t_{c_i^x})$ is linearly increasing in $\frac{1}{\beta}$.
\cite{CY2018} showed that $jr^{c_i^x}(\underline{p}_i)-jr^{c_i^x}(\overline{p}_i)>0$. Hence,
\begin{equation} \label{jr p_i}
jr^{c_i^x}(\underline{p}_i)-jr^{c_i^x}(\overline{p}_i) \ge \frac{1}{\beta} \left (
jr^{c_i^x}(\underline{p}_i)-jr^{c_i^x}(\overline{p}_i) \right )= jr^{c_i^x}(\beta \underline{p}_i)-jr^{c_i^x}(\beta \overline{p}_i).
\end{equation}
According to \eqref{SAC def}, and substituting Eqs. \eqref{SAC underline p_i}, \eqref{SAC overline p_i}, and \eqref{jr p_i}:
\[
\Delta_{c_{i}^{x}}^{TC_{\emph{Variables}}}\left( \beta \overline{p}_{i},S\right) -\Delta_{c_{i}^{x}}^{TC_{\emph{Variables}}}\left(  \beta \underline{p}_{i},S\right)> \Delta_{c_{i}^{x}}^{TC_{\emph{Variables}}}\left( \overline{p}_{i},S\right) -\Delta_{c_{i}^{x}}^{TC_{\emph{Variables}}}\left( \underline{p}_{i},S\right)>0,
\]
where the second inequality follows from \cite{CY2018}. This completes the proof of Lemma \ref{Lemma Delta p_i}.
\end{proof}
Let us denote the following four solutions
\begin{equation*}%
\begin{array}
[c]{cc}%
\underline{S} & - \ \forall c_i^x \in \emph{Variables}:t_{c^x_i}=\underline{p}_i\\
\overline{S} & - \ \forall c_i^x \in \emph{Variables}:t_{c^x_i}=\overline{p}_i\\
\beta \underline{S} & - \ \forall c_i^x \in \emph{Variables}:t_{c^x_i}=\beta \underline{p}_i\\
\beta \overline{S} & - \ \forall c_i^x \in \emph{Variables}:t_{c^x_i}= \beta \overline{p}_i.
\end{array}
\label{EOQ rounding rules}%
\end{equation*}

Using Lemma \ref{Lemma Delta p_i} we get that in a continuous environment:
\[
LB \left (TC_{\emph{Variables}}\left(  S\right) \right)= TC_{\emph{Variables}}\left(\beta  \underline{S}\right)
\] 
and 
\[
UB \left (TC_{\emph{Variables}}\left(  S\right) \right)= TC_{\emph{Variables}}\left( \beta \overline{S}\right).
\] 
We consider the time series induced by the entire set of \emph{Variables} in a given solution $S$ and denote it $F(S)$.
Let us extend the notation of $jr^{c_i^x}(t_{c_i^x})$ to this set:
\[
jr^{S}(\beta)= UJR\left( F(S)\right)-IJR\left( F(S),\left(\emph{Constants},\beta\right)\right).
\]
In addition, we denote $jr^{c_i^z}(S,\beta)$ to be the marginal addition of a single commodity $c_i^z$ to the joint replenishment cost assuming solution $S$.
\[
jr^{c_i^z}(S,\beta)= UJR\left( F_{t^*_{c_i^z}}\right)-IJR\left( F_{t^*_{c_i^z}},\left(\emph{Constants},\beta\right) \right).
\]

\cite{CY2018} proved that the lower bound on a solution with even one unsynchronized commodity of type \emph{Clauses} is greater than the upper bound on a solution where all commodities of type \emph{Clauses} are satisfied.
That is, given a solution $S$ with a commodity $C_i^z \in Clauses$ that is unsynchronized with any of its associated variable commodities:
\begin{equation*}
LB \left (TC_{\emph{Variables}}\left(  S\right) \right)+LB\left(jr^{c_i^z}(S,1)\right)>UB \left (TC_{\emph{Variables}}\left(  S\right) \right)\\
\end{equation*}
And according to Lemma \ref{Lemma Delta p_i},
 \begin{equation*}
TC_{\emph{Variables}}\left( \underline{S}\right)+LB\left(jr^{c_i^z}(S,1)\right)>TC_{\emph{Variables}}\left(\overline{S}\right).\\
\label{Eq TC discrete final}
\end{equation*}

Based on this, we shall prove the following Lemma:
\begin{lemma}
$TC_{\emph{Variables}}\left( \beta\underline{S}\right)+LB\left(jr^{c_i^z}(S,\beta)\right)-TC_{\emph{Variables}}\left(\beta \overline{S}\right)>0$
\end{lemma}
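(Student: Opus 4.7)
The strategy is to reduce the continuous inequality to the discrete one of \cite{CY2018} via a uniform $\beta$-scaling argument. I would decompose each $TC_{\emph{Variables}}$ into its standalone part $\sum_i SAC^{c_i^x}(\cdot)$ and its marginal joint-replenishment part $K_0\,jr(\cdot,\beta)$. The crucial observation is that scaling every cycle time in the system (variables and constants alike) by $\beta$ divides every $UJR$ and $IJR$ rate by $\beta$. In particular the relevant joint-replenishment terms satisfy
\begin{equation*}
jr(\beta \underline{S}, \beta) = \tfrac{1}{\beta}\,jr(\underline{S}, 1),\qquad jr(\beta \overline{S}, \beta) = \tfrac{1}{\beta}\,jr(\overline{S}, 1),\qquad LB(jr^{c_i^z}(S,\beta)) = \tfrac{1}{\beta}\,LB(jr^{c_i^z}(S,1)).
\end{equation*}

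Set $\Psi := TC_{\emph{Variables}}(\underline{S}) + LB(jr^{c_i^z}(S,1)) - TC_{\emph{Variables}}(\overline{S}) > 0$, which is precisely the discrete statement inherited from \cite{CY2018}. Using the scalings above together with the EOQ formula $SAC^{c_i^x}(t) = K_{c_i^x}/t + h_{c_i^x}t$ (since $\lambda_c = 2$), a short calculation yields the identity $SAC^{c_i^x}(\beta t) - \tfrac{1}{\beta}\,SAC^{c_i^x}(t) = h_{c_i^x}(\beta - 1/\beta)\,t$. Substituting $t = \underline{p}_i$ and $t = \overline{p}_i$, summing over $i$, and combining with the joint-replenishment scalings gives
\begin{equation*}
TC_{\emph{Variables}}(\beta\underline{S}) + LB(jr^{c_i^z}(S,\beta)) - TC_{\emph{Variables}}(\beta\overline{S}) \;=\; \tfrac{1}{\beta}\,\Psi \;-\; (\beta - 1/\beta)\sum_i h_{c_i^x}\,b_i.
\end{equation*}

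Thus the lemma reduces to showing $\Psi > (\beta^2 - 1)\sum_i h_{c_i^x}\,b_i$. Since $1 \le \beta \le 1+\delta$, we have $\beta^2 - 1 \le 3\delta$, and the explicit formula in Eq.~\eqref{DEF vars h} shows that each product $h_{c_i^x}b_i$ is of order one in the constants $\alpha_c, \underline{p}_i, b_i$ of \cite{CY2018}, so the right-hand side is at most $O(n\delta)$. The definition of $\delta$ in Eq.~\eqref{eq: delta} was tailored precisely so that this aggregate correction is dominated by the discrete slack $\Psi$ available from \cite{CY2018}. The main obstacle is the bookkeeping step: tracking the absolute magnitude of the discrete slack $\Psi$ inherited from \cite{CY2018}'s construction and confirming that it exceeds the tiny $O(n\delta)=O(1/(\overline{p}_n)^6)$ perturbation introduced by the continuous relaxation---mirroring the role $\delta$ played in \secref{Sec: Const and Claus} when reconciling the constants' seeds.
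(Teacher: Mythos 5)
Your proposal is correct and follows essentially the same route as the paper: decompose $TC$ into standalone plus joint-replenishment parts, use the exact $1/\beta$ scaling of all $jr$ terms, isolate the residual standalone perturbation $\left(\beta-1/\beta\right)\sum_i h_{c_i^x}b_i$ (which the paper bounds via $b_i h_{c_i^x}<1$ and $1-\beta>-\delta$, giving $3n\delta = \tfrac{1}{2(\overline{p}_n)^6}$), and compare against the discrete slack. The one "bookkeeping" step you leave open is exactly what the paper closes by citing $\Psi \geq \tfrac{1}{(\overline{p}_n)^6}$ from \cite{CY2018}, which indeed dominates the perturbation and yields the final bound $\tfrac{1}{4(\overline{p}_n)^6}>0$.
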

\begin{proof}
The cost $TC_{\emph{Variables}}\left (\beta S \right)$ is constructed of two elements, the standalone cost and the marginal addition to the number of joint replenishments per period:
\begin{align*}
TC_{\emph{Variables}}\left (\beta S \right)&=\sum \limits_{{c_i^x} \in \emph{Variables} }SAC^{c_i^x}(t_{c_i^x})+K_0 \cdot jr^S(\beta)\\
&=\sum_{c_{i}^{x}\in\emph{Variables}}\left(  \frac{K_{c_{i}^{x}}}%
{t_{c_i^x}}+t_{c_i^x}\cdot h_{c_{i}^{x}}\right)+K_0 \cdot jr^S(\beta).
\end{align*}
Accordingly,
\begin{align*}
&TC_{\emph{Variables}}\left( \beta\underline{S}\right)-TC_{\emph{Variables}}\left(\beta \overline{S}\right)+LB\left(jr^{c_i^z}(S,\beta)\right)\\
&  =\sum_{c_{i}^{x}\in\emph{Variables}}\left(  \frac{K_{c_{i}^{x}}}%
{\beta \underline{p}_{i}}+\beta \underline{p}_{i}\cdot h_{c_{i}^{x}}- \frac{K_{c_{i}^{x}}}%
{\beta \overline{p}_{i}}-\beta\overline{p}_{i}\cdot h_{c_{i}^{x}}\right)
+K_0\left(jr^{\beta \underline{S}}(\beta)-jr^{\beta \overline{S}}(\beta) +LB\left(jr^{c_i^z}(S,\beta)\right)\right).
&
\end{align*}

Substituting for $\overline{p}_{i}=\left( \underline{p}_{i}+b_{i}\right) $
\begin{align}
&TC_{\emph{Variables}}\left( \beta\underline{S}\right)-TC_{\emph{Variables}}\left(\beta \overline{S}\right) +LB\left(jr^{c_i^z}(S,\beta)\right)\nonumber\\
&  =\sum_{c_{i}^{x}\in\emph{Variables}}\left( \frac{b_{i}K_{c_{i}^{x}}%
}{\beta \underline{p}_{i}\left( \underline{p}_{i}+b_{i}\right) }-\beta \cdot b_{i}\cdot
h_{c_{i}^{x}}\right) +K_0\left(jr^{\beta \underline{S}}(\beta)-jr^{\beta \overline{S}}(\beta) +LB\left(jr^{c_i^z}(S,\beta)\right)\right).
\label{LB(TC_Variables(S))-UB(TC_Variables)}%
\end{align}

Substituting for $K_{c_{i}^{x}}$ and $K_{0}$ using Eqs. \eqref{DEF vars K} and \eqref{DEF K_0} into Eq. \eqref{LB(TC_Variables(S))-UB(TC_Variables)} we get:
\begin{align*}
&TC_{\emph{Variables}}\left( \beta\underline{S}\right)-TC_{\emph{Variables}}\left(\beta \overline{S}\right) +LB\left(jr^{c_i^z}(S,\beta)\right)\nonumber\\
&  =\sum_{c_{i}^{x}\in\emph{Variables}}\left(  
b_{i}\cdot
h_{c_{i}^{x}}\left ( \frac{1}{\beta}-\beta\right) 
-\frac{\alpha_{c}%
\overline{\alpha}_{v}b_{i}}{\beta\underline{p}_{i}\left(  \underline{p}_{i}%
+b_{i}-1\right)  }\right)  +\left(jr^{\beta \underline{S}}(\beta)-jr^{\beta \overline{S}}(\beta)+LB\left(jr^{c_i^z}(S,\beta)\right) \right) \nonumber \\
&> \sum_{c_{i}^{x}\in\emph{Variables}}\left(  
\left ( \frac{1}{\beta}-\beta\right) 
-\frac{\alpha_{c}%
\overline{\alpha}_{v}b_{i}}{\beta\underline{p}_{i}\left(  \underline{p}_{i}%
+b_{i}-1\right)  }\right)  +\left(jr^{\beta \underline{S}}(\beta)-jr^{\beta \overline{S}}(\beta)+LB\left(jr^{c_i^z}(S,\beta)\right) \right), \nonumber
\end{align*}
where the inequality comes from $b_{i}\cdot h_{c_{i}^{x}} < 1$ and $\beta \ge 1$ (which makes $\left ( \frac{1}{\beta}-\beta\right) \le 0$).

The joint replenishment frequency scales linearly with the $\beta$, hence the cost is linear to $\frac{1}{\beta}$. Thus:
\begin{align}
&TC_{\emph{Variables}}\left( \beta\underline{S}\right)-TC_{\emph{Variables}}\left(\beta \overline{S}\right) +LB\left(jr^{c_i^z}(S,\beta)\right)  \nonumber\\
&\ge \frac{1}{\beta}\sum_{c_{i}^{x}\in\emph{Variables}}\left( \left ( 1-\beta^2 \right )
-\frac{\alpha_{c}%
\overline{\alpha}_{v}b_{i}}{\underline{p}_{i}\left(  \underline{p}_{i}%
+b_{i}-1\right) }\right) +
\frac{1}{\beta}
\left(jr^{\underline{S}}(1)-jr^{\overline{S}}(1)+LB\left(jr^{c_i^z}(S,1)\right) \right) \nonumber\\
&\ge \frac{1}{\beta}\sum_{c_{i}^{x}\in\emph{Variables}}\left( \left ( 1-\beta
\right )\left ( 1+\beta \right ) -\frac{\alpha_{c}%
\overline{\alpha}_{v}b_{i}}{\underline{p}_{i}\left(  \underline{p}_{i}%
+b_{i}-1\right)  }\right) +
\frac{1}{\beta}
\left(jr^{\underline{S}}(1)-jr^{\overline{S}}(1)+LB\left(jr^{c_i^z}(S,1)\right) \right) \nonumber\\
&> \frac{1}{\beta}\sum_{c_{i}^{x}\in\emph{Variables}}\left( -3\delta
-\frac{\alpha_{c}%
\overline{\alpha}_{v}b_{i}}{\underline{p}_{i}\left( \underline{p}_{i}%
+b_{i}-1\right) }\right) +
\frac{1}{\beta}
\left(jr^{\underline{S}}(1)-jr^{\overline{S}}(1)+LB\left(jr^{c_i^z}(S,1)\right) \right) \nonumber\\
&=\frac{-3n \cdot \delta}{\beta}+ \frac{1}{\beta}\left [ \sum_{c_{i}^{x}\in\emph{Variables}}\left( -\frac{\alpha_{c}%
\overline{\alpha}_{v}b_{i}}{\underline{p}_{i}\left(  \underline{p}_{i}%
+b_{i}-1\right)  }\right)  +
\left(jr^{\underline{S}}(1)-jr^{\overline{S}}(1)+LB\left(jr^{c_i^z}(S,1)\right) \right) \right ], \label{Eq: difference}
\end{align}
where the last inequality comes from $(1-\beta >-\delta)$ and $\beta<2$.
The term in square brackets in Eq. \eqref{Eq: difference} is, in fact, the term $TC_{\emph{Variables}}\left( \underline{S}\right)-TC_{\emph{Variables}}\left( \overline{S}\right) +LB\left(jr^{c_i^z}(S,1)\right)$. \cite{CY2018} showed that:
\begin{equation}
TC_{\emph{Variables}}\left( \underline{S}\right)-TC_{\emph{Variables}}\left( \overline{S}\right) +LB\left(jr^{c_i^z}(S,1)\right) \ge \frac{1}{\left (\overline{p}_n \right)^6}.
\label{Eq: diff without beta}
\end{equation}
Substituting for $\delta$ using Eq. \eqref{eq: delta} and using Eq. \eqref{Eq: diff without beta} to replace the square brackets in Eq. \eqref{Eq: difference} we get:
\begin{align*}
&TC_{\emph{Variables}}\left( \beta\underline{S}\right)-TC_{\emph{Variables}}\left(\beta \overline{S}\right) +LB\left(jr^{c_i^z}(S,\beta)\right)  \nonumber\\ 
& > \frac{1}{\beta} \left (\frac{-3n}{6n\left (\overline{p}_n \right)^6}+\frac{1}{\left (\overline{p}_n \right)^6}\right )\\
&>\frac{1}{2} \cdot \frac{1}{2\left (\overline{p}_n \right)^6}= \frac{1}{4\left (\overline{p}_n \right)^6}>0,
\end{align*}
where the second inequality is true since $\beta<2$. This completes our proof.

\end{proof}

\bibliographystyle{informs2014} 
\bibliography{general0.bib}

\end{document}